\documentclass[12pt]{amsart}
\usepackage{amssymb, amsmath}
\usepackage[round]{natbib}
\usepackage{fullpage}

\usepackage{hyperref} 
\urlstyle{rm} 

\newtheorem{thm}{Theorem}[section]
\newtheorem{conj}{Conjecture}[section]
\newtheorem{prop}{Proposition}[section]
\newtheorem{lem}{Lemma}[section]
\newtheorem*{cor}{Corollary}

\theoremstyle{definition}

\theoremstyle{remark}
\newtheorem*{rem}{Remark}

\DeclareMathOperator{\des}{des}
\DeclareMathOperator{\cdes}{cdes}
\DeclareMathOperator{\ides}{ides}
\DeclareMathOperator{\asc}{asc}
\DeclareMathOperator{\iasc}{iasc}

\newcommand{\Sym}{\mathfrak{S}}
\newcommand{\eulerian}[2]{\left\langle\genfrac{}{}{0pt}{}{#1}{#2}\right\rangle}

\author{Mirk\'o Visontai}
\title[On the joint distribution of descents
and inverse descents]{Some remarks on the joint distribution of descents
and inverse descents}
\address{Department of Mathematics, Royal Institute of Technology,
SE-100 44 Stockholm, Sweden}
\email{visontai@kth.se}
\keywords{Permutations, descents, inverse descents, Eulerian numbers}
\dedicatory{In memoriam Herb Wilf}

\begin{document}
\begin{abstract}
  We study the joint distribution of descents and inverse descents over
  the set of permutations of $n$ letters. Gessel conjectured that the two-variable
  generating function of this distribution can be expanded in a given basis with nonnegative
  integer coefficients. We investigate the action of the Eulerian operators that give the 
  recurrence for these generating functions. As a result we devise a 
  recurrence for the coefficients in question but are unable to settle the conjecture.
  
  We examine generalizations of the conjecture and obtain a type $B$ analog of the 
  recurrence satisfied by the two-variable generating function. 
  We also exhibit some connections to cyclic descents and cyclic inverse descents.
  Finally, we 
  propose a combinatorial model for the joint distribution 
  in terms of statistics on inversion sequences. 
\end{abstract}

\maketitle

\section{Introduction}
Let $\mathfrak{S}_n$ denote the set of permutations of $\{1, \dotsc, n\}$. The 
number of \emph{descents} in a permutation $\pi = \pi_1\dotso\pi_n$ is defined as 
$\des(\pi) = |\{ i : \pi_i > \pi_{i+1}\}|.$ 
Our object of study is the two-variable generating function of descents and \emph{inverse descents}:

\[ A_n(s,t) =  \sum_{\pi \in \Sym_n} s^{\des(\pi^{-1})+1} t^{\des(\pi)+1}\,. \]

The specialization of this polynomial to a single variable reduces to the \emph{classical 
Eulerian polynomial}:
\[A_n(t) = A_n(1,t) = \sum_{\pi \in \Sym_n} t^{\des(\pi)+1} = 
\sum_{k=1}^n \eulerian{n}{k} t^k\,.\]

Eulerian polynomials and their coefficients
play an important role (not only) in enumerative combinatorics. The classical, univariate polynomials 
are quiet well-studied---see, for example, \cite{Car59,FS70}. 
This cannot be said for the bivariate generating function for the 
pair of statistics $(\des, \ides)$. 
Here and throughout this note
we will use the shorthand 
$\ides(\pi) = \des(\pi^{-1}).$ 
Our main motivation to study these polynomials is the following conjecture of Gessel 
which appeared in a recent article by \cite{Bra08}; see also a nice exposition by \cite{Pet12}.

\begin{conj}[Gessel] 
\label{conj:gessel}
For all $n \ge 1$, 
\[A_n(s,t) = \sum_{i,j} \gamma_{n,i,j} (st)^i (s+t)^j  (1+st)^{n+1-j-2i}\,,\]
where $\gamma_{n,i,j}$ are nonnegative integers for all $i,j \in \mathbb{N}.$
\end{conj}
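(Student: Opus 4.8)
The approach I would take goes through the recurrence for the polynomials $A_n(s,t)$. The first step is to derive that recurrence by analyzing the insertion of the largest letter into a permutation $\pi\in\mathfrak{S}_n$: one family of the $n+1$ insertion slots (the ascent slots, together with the front) raises $\des$ by one, while another family---controlled by the position of the current maximum---raises $\ides$ by one, and combining these two bookkeepings produces the \emph{bivariate Eulerian operator(s)} governing $A_n(s,t)$ (and, if the position of the maximum cannot be eliminated, a suitably refined companion polynomial recording how the descents of $\pi$ split around its maximum). This is the two-variable counterpart of the univariate Eulerian recurrence
\[
  A_{n+1}(t)=(n+1)t\,A_n(t)+t(1-t)A_n'(t),
\]
which the $\des$-bookkeeping alone yields. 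Write $B^{(n)}_{i,j}:=(st)^i(s+t)^j(1+st)^{n+1-j-2i}$ for the proposed basis elements, so that Conjecture~\ref{conj:gessel} reads $A_n=\sum_{i,j}\gamma_{n,i,j}\,B^{(n)}_{i,j}$ with all $\gamma_{n,i,j}\in\mathbb{N}$.

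The remaining steps are: (1)~verify that $\{B^{(n)}_{i,j}\}$ is linearly independent and spans the space containing $A_n$, by a leading-term and total-degree count, so each $\gamma_{n,i,j}$ is well defined and uniquely determined by $A_n$; (2)~apply the Eulerian operator(s) to a basis element $B^{(n)}_{i,j}$ and re-express the result in the basis $\{B^{(n+1)}_{i',j'}\}$---a finite computation, whose univariate shadow is the directly checked identity
\[
  (n+1)t\cdot t^i(1+t)^{n+1-2i}+t(1-t)\,\frac{d}{dt}\bigl(t^i(1+t)^{n+1-2i}\bigr)=i\,t^i(1+t)^{n+2-2i}+2(n+1-2i)\,t^{i+1}(1+t)^{n-2i};
\]
(3)~read off the induced recurrence $\gamma_{n+1,i,j}=\sum_{i',j'}\kappa^{(n)}_{i,j;\,i',j'}\,\gamma_{n,i',j'}$ with explicit coefficients $\kappa$, anchored by $\gamma_{1,1,0}=1$ and $\gamma_{1,i,j}=0$ otherwise; (4)~conclude, by induction on $n$, that every $\gamma_{n,i,j}$ is a nonnegative integer.

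Step~(4) is the crux, and the reason the statement remains a conjecture. The ideal outcome is that each $\kappa^{(n)}_{i,j;\,i',j'}$ is itself a nonnegative integer, making the induction immediate; this is exactly what happens in the univariate shadow, where the displayed identity shows that the Eulerian operator carries the cone spanned by $\{t^i(1+t)^{n+1-2i}\}$ with nonnegative coefficients into the corresponding cone for $n+1$, and in fact gives the manifestly positive recurrence $\gamma_{n+1,i}=i\,\gamma_{n,i}+2(n+3-2i)\,\gamma_{n,i-1}$. In two variables, however, re-expressing the image of $B^{(n)}_{i,j}$ in the basis $\{B^{(n+1)}_{i',j'}\}$ forces one to play the three factors $1+st$, $s+t$ and $st$ against one another, and the outcome carries terms of both signs that do not obviously cancel; so the recurrence from Step~(3) is not manifestly positive. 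Getting past this would require regrouping it into a visibly positive form, or a sign-reversing involution on the negative contributions, or---most decisively---a direct combinatorial reading of $\gamma_{n,i,j}$ as the cardinality of an explicit set, which would make positivity tautological. The bijective route is natural to try, since univariate $\gamma$-positivity also has a combinatorial proof via the Foata--Strehl (``valley-hopping'') group action on $\mathfrak{S}_n$; but that action does not commute with $\pi\mapsto\pi^{-1}$, so it is blind to $\ides$, and no substitute simultaneously controlling $\des$ and $\ides$ is presently known. Realistically, then, this plan carries through Steps~(1)--(3)---producing an explicit recurrence for the $\gamma_{n,i,j}$ and confirming their nonnegativity for small $n$---but stalls at Step~(4); a genuine resolution would most plausibly come from the combinatorial side, namely a model for the joint distribution of $(\des,\ides)$, for instance via suitable statistics on inversion sequences, in which the basis $\{B^{(n)}_{i,j}\}$ and the integers $\gamma_{n,i,j}$ both acquire a transparent meaning---with the link to cyclic descents and cyclic inverse descents as a further potential source of structure.
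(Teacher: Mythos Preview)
Your proposal is essentially the paper's own approach: derive the bivariate Eulerian recurrence, apply the resulting operator to the basis elements $B^{(n)}_{i,j}$, extract an explicit recurrence for the $\gamma_{n,i,j}$, and observe that---unlike the univariate case---the coefficients in that recurrence carry both signs, so positivity does not follow by induction; the paper, like you, leaves the conjecture open and points toward inversion sequences and cyclic descents as possible combinatorial routes. The one practical refinement in the paper is that it first passes to the four-variable homogenization $A_n(s,t;x,y)=(xy)^{n+1}A_n(s/x,t/y)$, in which the operator factors cleanly as $(n-1)(s-x)(t-y)+stxy(\partial_s+\partial_x)(\partial_t+\partial_y)$ and the basis becomes $(stxy)^i(st+xy)^j(tx+sy)^{n+1-2i-j}$, making the computation of the $\gamma$-recurrence considerably tidier than working directly with $(st)^i(s+t)^j(1+st)^{n+1-2i-j}$; also, the recurrence for $A_n(s,t)$ is taken from Carlitz--Roselle--Scoville (via a binomial identity) rather than from an insertion argument, which as you already suspect does not straightforwardly yield a closed recurrence in this bivariate setting.
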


If true, this decomposition would refine the following classical
result, the {$\gamma$-\emph{nonnegativity}} for the Eulerian polynomials $A_n(t)$.

\begin{thm}[Th\'eor\`eme 5.6 of \citet{FS70}]
\label{thm:FS}
\[A_n(t) = \sum_{i=1}^{\lceil n/2 \rceil} \gamma_{n,i} 
t^i (1+t)^{n+1-2i}\,,\]
where $\gamma_{n,i}$ are nonnegative integers  
for all $i \in \mathbb{N}.$
\end{thm}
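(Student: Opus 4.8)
The plan is to prove Theorem~\ref{thm:FS} by induction on $n$, using the classical recurrence for the Eulerian polynomials. First I would record that, in the present normalization $A_n(t)=\sum_k\eulerian{n}{k}t^k$, the Eulerian number recurrence $\eulerian{n}{k}=k\eulerian{n-1}{k}+(n+1-k)\eulerian{n-1}{k-1}$ --- obtained by inserting the letter $n$ into a permutation of $\Sym_{n-1}$ --- translates, after multiplying by $t^k$ and summing, into the differential recurrence
\[
A_n(t)=nt\,A_{n-1}(t)+t(1-t)\,A_{n-1}'(t),\qquad A_1(t)=t.
\]
Write $L$ for the operator $f\mapsto ntf+t(1-t)f'$. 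Since $A_n$ is palindromic (the reversal $\pi_i\mapsto\pi_{n+1-i}$ shows $\des$ and $n-1-\des$ are equidistributed), it lies a priori in the span of $\{t^i(1+t)^{n+1-2i}:1\le i\le\lceil n/2\rceil\}$; the content of the theorem is the nonnegativity of the coefficients, and the induction below will in fact produce the expansion directly.

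The heart of the argument is to apply $L$ to a single element of the $\gamma$-basis at level $n-1$. Setting $f_i=t^i(1+t)^{n-2i}$, I would factor $t^i(1+t)^{n-2i-1}$ out of $L(f_i)$ and simplify the remaining quadratic in $t$ as $i(1+t)^2+2(n-2i)t$, obtaining the clean identity
\[
L(f_i)=i\,t^i(1+t)^{n+1-2i}+2(n-2i)\,t^{i+1}(1+t)^{n-1-2i}.
\]
The point --- a small miracle that makes the whole scheme work --- is that \emph{both} terms on the right are again members of the $\gamma$-basis at level $n$, namely the elements indexed by $i$ and by $i+1$; no stray terms appear. Consequently, if $A_{n-1}(t)=\sum_i\gamma_{n-1,i}\,t^i(1+t)^{n-2i}$, then $A_n(t)=\sum_i\gamma_{n,i}\,t^i(1+t)^{n+1-2i}$ with
\[
\gamma_{n,i}=i\,\gamma_{n-1,i}+2(n+2-2i)\,\gamma_{n-1,i-1}.
\]

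To finish, I would check that this recurrence preserves nonnegativity. Every coefficient on the right is manifestly $\ge 0$ except possibly $2(n+2-2i)$, so I would carry along in the induction the statement that $\gamma_{n-1,i-1}=0$ unless $i-1\le\lceil(n-1)/2\rceil$; since $\lceil(n-1)/2\rceil\le n/2$, this forces $n+2-2i\ge 0$ precisely on the support, whence $\gamma_{n,i}\ge 0$ and, moreover, $\gamma_{n,i}=0$ for $i>\lceil n/2\rceil$ (at $i=n/2+1$ with $n$ even the multiplier $2(n+2-2i)$ vanishes). The base case $A_1(t)=t$ gives $\gamma_{1,1}=1$ and starts the induction. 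The only real obstacle is the computation producing the displayed identity for $L(f_i)$ and keeping careful track of the index range so that the nonnegative-multiplier condition is never violated; beyond that the argument is routine. I would also remark that a more structural proof is available through the Foata--Strehl ``valley-hopping'' group action on $\Sym_n$, whose orbits each have descent generating polynomial $t^i(1+t)^{n+1-2i}$ and which thereby realizes $\gamma_{n,i}$ as the number of permutations of $\{1,\dots,n\}$ with $i-1$ descents, no double descents, and no descent in the last position --- but the recurrence-based proof above is the one closest in spirit to the operator calculus used in the rest of this note.
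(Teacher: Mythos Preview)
Your argument is correct and follows essentially the same route as the paper: apply the Eulerian operator to the $\gamma$-basis, observe that each basis element is carried to a nonnegative combination of basis elements at the next level, and read off the recurrence $\gamma_{n,i}=i\,\gamma_{n-1,i}+2(n+2-2i)\,\gamma_{n-1,i-1}$. The only cosmetic difference is that the paper works with the homogenized operator $T=ty(\partial_t+\partial_y)$ acting on $(ty)^i(t+y)^{n+1-2i}$, whereas you use the dehomogenized form $L=nt+t(1-t)\partial_t$ on $t^i(1+t)^{n-2i}$; setting $y=1$ in the paper's computation recovers yours, and the resulting recurrence is identical.
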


Before giving their proof, let us 
recall the recurrence satisfied by the Eulerian polynomials:
\begin{equation}
A_n (t) = n t A_{n-1}(t) + t(1-t) \frac{\partial}{\partial t} A_{n-1}(t)\,, 
\mbox{ for } n\ge 2\, ,
\label{eq:eulerian}
\end{equation}
with initial value $A_1(t) = t$.

\citet[Chapitre V]{FS70} give a purely algebraic proof of Theorem~\ref{thm:FS} 
by considering the \emph{homogenized} Eulerian polynomial, of degree $n+1$,
\begin{equation}\begin{split}
 A_n(t;y) &=  y^{n+1}A_n(t/y) \\
 &= \sum_{\pi \in \Sym_n}  t^{\des(\pi)+1}y^{\asc(\pi)+1}\,, 
\label{eq:hompolydefn}
\end{split}
\end{equation}
where $\asc(\pi)$ denotes the number of ascents ($\pi_i<\pi_{i+1}$)
in the permutation $\pi = \pi_1 \dotso \pi_n$. Note that this polynomial is different
from and therefore should not be confused with 
$A_n(s,t)$. To avoid confusion we use a semicolon and different variables.
We include their proof next, as we will be applying the same idea to the 
joint generating polynomial of descents and inverse descents in 
Section~\ref{sec:recurrencegamma}.

\begin{proof}[Proof of Theorem~\ref{thm:FS}]
The homogenized Eulerian polynomials defined in (\ref{eq:hompolydefn}) 
satisfy the recurrence
\begin{equation}
\label{eq:homogenizedEulerianRec}
A_{n}(t;y) = ty\left(\frac{\partial}{\partial t} A_{n-1}(t;y)+ 
\frac{\partial}{\partial y}A_{n-1}(t;y)\right), \mbox{ for } n\ge 2\,,
\end{equation}
which follows from observing the effect on the number of descents and ascents
of inserting the letter $n$ into a permutation 
of $\{1, \dotsc, n-1\}$. Compare this with the recurrence in (\ref{eq:eulerian}).

It is clear from symmetry observations that $A_n(t;y)$ can be written (uniquely) 
in the basis 
\[\left\{(ty)^i(t+y)^{n+1-2i}\right\}_{i=1\dotso \lceil\frac{n}{2}\rceil}\]
with some coefficients $\gamma_{n,i}$. To show that $\gamma_{n,i}$ are in fact
nonnegative integers consider the action of the operator 
$T = ty\left({\partial}/{\partial t} + {\partial}/{\partial y}\right)$ on 
a basis element. 
Apply $T$ on the $i$th basis element we get that
\[ T[(ty)^i(t+y)^{n+1-2i}] = i(ty)^i(t+y)^{n+2-2i} + 
2(n+1-2i)(ty)^{i+1}(t+y)^{n-2i}, \]
which in turn implies the following recurrence on the coefficients:
\begin{equation}
\gamma_{n+1,i} = i \gamma_{n,i} + 2(n+3-2i)\gamma_{n,i-1}.
\label{rec:gamma_ni}
\end{equation}

The statement of Theorem~$\ref{thm:FS}$ now follows, since the initial values are
nonnegative integers, in particular, $\gamma_{1,1}= 1$ and $\gamma_{1,i} = 0$ 
for $i\ne 1$. Furthermore, the constraint $1\le i \le \lceil\frac{n}{2}\rceil$ assures that 
both positivity and integrality are preserved by recurrence~(\ref{rec:gamma_ni}). 

\end{proof}

\begin{rem}
The study of these so-called Eulerian operators goes back to Carlitz as was 
pointed out to the author by I.~Gessel. See, for example, \cite{Car73} for a slightly different 
variant of $T$. Also, the operator $t\left(n+ (1-t) (\partial/\partial t)\right)$ is closely related to 
a special case of a generalized derivative operator already studied by Laguerre, called
\textit{\'emanant} or polar derivative; see, for example, 6.~in \cite{Mar35}. 
\end{rem}

Finally, we must also mention the ``valley-hopping'' proof of Theorem~\ref{thm:FS} 
by \citet*[Proposition 4]{SWG83} 
which is a beautiful construction that proves that the coefficients $\gamma_{n,i}$ are not only
nonnegative integers but that they are, in fact, cardinalities of certain equivalence 
classes of permutations. Their proof is part of a more general phenomenon, an action of
transformation groups on the symmetric group $\Sym_n$ studied by
\citet{FS74}.

\section{Symmetries of $A_n(s,t)$ and a homogeneous recurrence}

The polynomials $A_n(s,t)$ were first studied by \cite*{CRS66}.
They proved a recurrence for the coefficients of $A_n(s,t)$ (see equation (7.8) in their article---note
there is an obvious typo in the last row of the equation, cf.~equation (7.7) in the same article). The
recurrence they provide for the coefficients is equivalent to the following one for 
the generating functions. 
\begin{thm}[Equation (9) of \citet*{Pet12}] For $n\ge 2$, 
\[
\begin{split}
nA_n(s,t) = &\left(n^2st+(n-1)(1-s)(1-t)\right)A_{n-1}(s,t)\\
 &+ nst(1-s)\frac{\partial}{\partial s}A_{n-1}(s,t) + 
nst(1-t)\frac{\partial}{\partial t}A_{n-1}(s,t)\\ 
&+st(1-s)(1-t)\frac{\partial^2}{\partial s\partial t}A_{n-1}(s,t)\,,
\end{split}
\]
with initial value $A_1(s,t) = st.$
\label{thm:Carlitz}
\end{thm}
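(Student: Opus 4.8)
The plan is to prove the recurrence combinatorially, by inserting the letter $n$ into permutations of $\Sym_{n-1}$ --- the device used above for Theorem~\ref{thm:FS} --- but now tracking \emph{both} $\des$ and $\ides$. Fix $\pi\in\Sym_{n-1}$ and let $\sigma\in\Sym_n$ be obtained by inserting $n$ into the $p$-th slot of $\pi$, $p\in\{1,\dots,n\}$. The effect on $\des$ is classical: $\des(\sigma)=\des(\pi)+1$ when $p$ is an \emph{ascent slot} (the front slot, or a slot over an ascent of $\pi$), and $\des(\sigma)=\des(\pi)$ otherwise. For $\ides$, note that the word $\sigma^{-1}$ arises from $\pi^{-1}$ by relabelling its letters through the unique increasing bijection $\{1,\dots,n-1\}\to\{1,\dots,n\}\setminus\{p\}$ and then appending the letter $p$; the relabelling is order preserving, so no descent among the first $n-1$ letters is affected, and a short check shows a descent is created at the end exactly when $p\le\pi^{-1}(n-1)$, i.e.\ when $n$ precedes $n-1$ in $\sigma$. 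Hence, writing $q_\pi$ for the position of $n-1$ in $\pi$,
\[
A_n(s,t)=\sum_{\pi\in\Sym_{n-1}}s^{\ides(\pi)+1}t^{\des(\pi)+1}\sum_{p=1}^{n}s^{a_p(\pi)}t^{b_p(\pi)},
\]
where $a_p(\pi),b_p(\pi)\in\{0,1\}$, with $a_p(\pi)=1$ iff $p\le q_\pi$ and $b_p(\pi)=1$ iff $p$ is an ascent slot.

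Next I would evaluate the inner sum. Each of the $n$ slots carries two bits --- ascent slot versus descent slot, and $p\le q_\pi$ versus $p>q_\pi$ --- which I would sort out using that the slot immediately before the letter $n-1$ is always an ascent slot and the slot immediately after it always a descent slot. The inner sum then becomes a polynomial in $s,t$ whose coefficients are built from $\des(\pi)$, $\asc(\pi)$, and the \emph{one-sided} counts $A_L(\pi),D_L(\pi)$ of the ascents and descents of $\pi$ lying strictly to the left of $n-1$. Summing over $\Sym_{n-1}$, the part not involving $A_L,D_L$ reassembles --- exactly as in the $s=1$ case, which must return the Eulerian recurrence~(\ref{eq:eulerian}) --- into an explicit combination of $A_{n-1}(s,t)$ and $\frac{\partial}{\partial t}A_{n-1}(s,t)$, while everything else carries an overall factor $(s-1)$ and is a linear combination of $\sum_\pi s^{\ides(\pi)+1}t^{\des(\pi)+1}A_L(\pi)$ and $\sum_\pi s^{\ides(\pi)+1}t^{\des(\pi)+1}D_L(\pi)$. (Multiplying through by $n$ then puts the identity in the stated shape; the initial value $A_1(s,t)=st$ is immediate.)

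The main obstacle is these two residual sums: $A_L$ and $D_L$ record where $n-1$ sits inside $\pi$, which the polynomial $A_{n-1}(s,t)$ itself cannot see, so there is no formal reason for them to re-express through $A_{n-1}$ and its derivatives --- the substance of the theorem is that, summed over all of $\Sym_{n-1}$, they do, and in particular they must contribute a $\frac{\partial^2}{\partial s\partial t}A_{n-1}$ term. I would try to establish the needed identity by exploiting the two symmetries of $\Sym_{n-1}$ that fix the pair $(\des,\ides)$, namely $\pi\mapsto\pi^{-1}$ and the complement--reverse $\pi_i\mapsto n-\pi_{n-i}$; these should convert ``descents to the left of $n-1$'' into genuine $\des$/$\ides$ information and produce exactly the $\frac{\partial}{\partial s}A_{n-1}$, $\frac{\partial}{\partial t}A_{n-1}$ and $\frac{\partial^2}{\partial s\partial t}A_{n-1}$ terms on the right, after which one matches coefficients (reading off exponents of $s$ and $t$ via the operators $s\frac{\partial}{\partial s}$ and $t\frac{\partial}{\partial t}$). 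Pinning down the coefficient of the mixed term is the delicate point. A cleaner route sidesteps the combinatorics entirely: invoke the classical product expansion $A_n(s,t)=(1-s)^{n+1}(1-t)^{n+1}\sum_{i,j\ge0}\binom{ij+n-1}{n}s^it^j$ (see \cite*{CRS66}); dividing the asserted identity by $(1-s)^n(1-t)^n$ and expanding the derivatives of the powers of $(1-s),(1-t)$ via the product rule, all terms cancel except two and the identity collapses to the elementary binomial relation $n\binom{ij+n-1}{n}=(ij+n-1)\binom{ij+n-2}{n-1}$.
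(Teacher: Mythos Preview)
The paper does not itself prove this theorem; it quotes the result from Petersen (tracing it to Carlitz--Roselle--Scoville). The intended argument is made explicit later, where the type~$B$ analogue is proved ``mimicking the proof of Theorem~\ref{thm:Carlitz} given by'' Petersen: one sets $F_n(s,t)=A_n(s,t)/\bigl((1-s)^{n+1}(1-t)^{n+1}\bigr)$, uses the Carlitz--Roselle--Scoville expansion $F_n=\sum_{i,j\ge0}\binom{ij+n-1}{n}s^it^j$, and reduces everything to the one-line identity $n\binom{ij+n-1}{n}=(ij+n-1)\binom{ij+n-2}{n-1}$, which gives $nF_n=st\,\frac{\partial^2}{\partial s\,\partial t}F_{n-1}+(n-1)F_{n-1}$; clearing denominators returns the stated recurrence term for term. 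Your ``cleaner route'' is precisely this argument, and it is complete and correct.

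Your primary combinatorial attack, by contrast, is not a proof as it stands. You correctly isolate the obstruction --- the residual sums $\sum_{\pi} s^{\ides(\pi)+1}t^{\des(\pi)+1}A_L(\pi)$ and its companion with $D_L$ --- and you correctly observe that $A_L,D_L$ record the \emph{position} of $n-1$ in $\pi$, information that $A_{n-1}(s,t)$ does not carry. But the hoped-for reduction via $\pi\mapsto\pi^{-1}$ and reverse--complement is only suggested, not carried out, and it is not clear that it can be: neither involution converts ``descents to the left of $n-1$'' into anything expressible through $(\des,\ides)$ alone (reverse--complement, for instance, moves the letter $n-1$ to wherever the letter $1$ sat). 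This is a genuine gap, and indeed no direct insertion proof of this recurrence appears in the literature --- the passage through the generating function seems to be essential. Since you do supply that argument, your proposal establishes the theorem, but only via its final paragraph, which coincides with the proof the paper has in mind.
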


At first glance, this recurrence might not seem very useful at all. However, if we introduce additional 
variables---to count ascents ($\asc$) and inverse ascents ($\iasc$)---we obtain a more transparent
recurrence. 
So, let us first define 
\begin{align} 
A_n(s,t;x,y) &=  \sum_{\pi \in \Sym_n} 
s^{\ides(\pi)+1} t^{\des(\pi)+1}x^{\iasc(\pi)+1} y^{\asc(\pi)+1}\\
&=\sum_{\pi \in \Sym_n} 
s^{\ides(\pi)+1} t^{\des(\pi)+1}x^{n-\ides(\pi)} y^{n-\des(\pi)}\\
&= (xy)^{n+1} A_n(s/x, t/y)\,.\label{eq:homogenized4var}
\end{align}

\begin{prop}
$A_n(s,t;x,y)$ is homogeneous of degree $2n+2$ and is invariant under the 
action of the Klein 4-group $V \cong \langle \mathrm{id}, (12)(34), (13)(24), (14)(23)\rangle$,
where the action of $\sigma \in V$ on $A_n(s,t;x,y)$ is permutation of the variables
accordingly (e.g., $\sigma=(13)(24)$ swaps $x$ with $s$ and $y$ with $t$, simultaneously).
\end{prop}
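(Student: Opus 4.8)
The plan is to read both assertions off the combinatorial definition of $A_n(s,t;x,y)$, using two elementary involutions of $\Sym_n$: inversion $\pi\mapsto\pi^{-1}$ and complementation.

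For homogeneity, I would observe that every $\pi\in\Sym_n$ satisfies $\des(\pi)+\asc(\pi)=n-1$, and, because $\pi^{-1}$ is again a permutation of $n$ letters, also $\ides(\pi)+\iasc(\pi)=\des(\pi^{-1})+\asc(\pi^{-1})=n-1$. Hence in every monomial of the defining sum the four exponents add up to $(\ides(\pi)+1)+(\des(\pi)+1)+(\iasc(\pi)+1)+(\asc(\pi)+1)=2n+2$, so $A_n(s,t;x,y)$ is homogeneous of degree $2n+2$. (Alternatively this drops straight out of the identity $A_n(s,t;x,y)=(xy)^{n+1}A_n(s/x,t/y)$ in (\ref{eq:homogenized4var}), since $A_n(s,t)$ has degree at most $n$ in each of $s$ and $t$.)

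For the $V$-invariance I would use that the transpositions $(12)(34)$ and $(13)(24)$ generate $V$ (their product is $(14)(23)$), so it suffices to produce, for each of them, a weight-preserving bijection of $\Sym_n$ realizing it. The map $\pi\mapsto\pi^{-1}$ interchanges $\des$ with $\ides$, and therefore $\asc$ with $\iasc$; re-indexing the defining sum by $\pi^{-1}$ thus gives $A_n(s,t;x,y)=A_n(t,s;y,x)$, which is precisely the action of $(12)(34)$. For $(13)(24)$ I would use complementation $\widehat{\pi}=(n+1-\pi_1)\cdots(n+1-\pi_n)$: this sends each descent of $\pi$ to an ascent, so $\des(\widehat\pi)=\asc(\pi)$; moreover $(\widehat\pi)^{-1}=\pi^{-1}w_0$ (with $w_0=n\,(n-1)\cdots 1$) is just the one-line word of $\pi^{-1}$ read backwards, whence $\ides(\widehat\pi)=\des(\pi^{-1}w_0)=\asc(\pi^{-1})=\iasc(\pi)$; re-indexing by $\widehat\pi$ then yields $A_n(s,t;x,y)=A_n(x,y;s,t)$, the action of $(13)(24)$. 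The third nontrivial element $(14)(23)$ is then automatic; explicitly it is realized by $\pi\mapsto\widehat{\pi^{-1}}$.

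I do not anticipate a real obstacle: the argument is essentially bookkeeping with $\des+\asc=n-1$, $\ides+\iasc=n-1$, and the two re-indexings. The only step that calls for a moment's care is the pair of facts $(\widehat\pi)^{-1}=\pi^{-1}w_0$ and ``right multiplication by $w_0$ reverses one-line notation and hence swaps $\des$ with $\asc$'', which together deliver $\ides(\widehat\pi)=\iasc(\pi)$; once that is in hand everything else is immediate.
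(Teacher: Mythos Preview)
Your proof is correct and follows essentially the same approach as the paper: the homogeneity argument is identical (reading the total degree off each monomial), and for the $V$-invariance you make explicit the same two symmetries---$\pi\mapsto\pi^{-1}$ and complementation---that the paper invokes by citing the known identities $A_n(s,t)=A_n(t,s)$ and $(st)^{n+1}A_n(1/s,1/t)=A_n(s,t)$ from \cite{Pet12} and then pushing them through the homogenization $A_n(s,t;x,y)=(xy)^{n+1}A_n(s/x,t/y)$. Your version is more self-contained, but the content is the same.
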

\begin{proof}
The homogeneity is immediate from the second line of the equation above. The invariance
is a consequence of 
the symmetry properties of 
$A_n(s,t)$, such as $A_n(s,t) = A_n(t,s)$; see, for example, equations (12--14) 
in \citep{Pet12}.
Note that, due to the introduction of the new variables, for $n \ge 4$, the 
polynomial $A_n(s,t;x,y)$ is \emph{not} symmetric. 
\end{proof}

Now we are in position to give our homogeneous recurrence.
\begin{thm} For $n\ge 2$, 
\begin{equation}
\begin{split}
nA_n(s,t;x,y) = &(n-1)(s-x)(t-y)A_{n-1}(s,t;x,y)\\
 &+ stxy\left(\frac{\partial}{\partial s} + 
 \frac{\partial}{\partial x}\right)\left(\frac{\partial}{\partial t}+
 \frac{\partial}{\partial y}\right)A_{n-1}(s,t;x,y)
\end{split}
\label{eq:fourvariable}
\end{equation}
with initial value $A_1(s,t;x,y) = stxy.$
\label{thm:fourvariable}
\end{thm}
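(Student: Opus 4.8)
The plan is to derive~(\ref{eq:fourvariable}) from the two-variable recurrence of Theorem~\ref{thm:Carlitz} by homogenization, in the same spirit as the passage from~(\ref{eq:eulerian}) to~(\ref{eq:homogenizedEulerianRec}) in the proof of Theorem~\ref{thm:FS} reproduced above --- the difference being that we now homogenize simultaneously in the two independent pairs of variables $\{s,x\}$ and $\{t,y\}$. A direct ``insert the letter $n$'' argument is less convenient here, since inserting $n$ into a permutation of $\{1,\dotsc,n-1\}$ acts on $\pi$ and on $\pi^{-1}$ in quite different ways; so I would route everything through the algebraic recurrence of Theorem~\ref{thm:Carlitz}.

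\textbf{Step 1: homogenize Theorem~\ref{thm:Carlitz}.} In that recurrence substitute $s \mapsto s/x$ and $t \mapsto t/y$ and multiply both sides by $(xy)^{n+1}$. By~(\ref{eq:homogenized4var}) the left-hand side becomes $n A_n(s,t;x,y)$. Writing $g = A_{n-1}$ and $F = A_{n-1}(s,t;x,y) = (xy)^{n} g(s/x,t/y)$, differentiating this relation gives
\[
g_s(s/x,t/y) = x^{1-n}y^{-n}\,\partial_s F, \qquad g_t(s/x,t/y) = x^{-n}y^{1-n}\,\partial_t F, \qquad g_{st}(s/x,t/y) = x^{1-n}y^{1-n}\,\partial_s\partial_t F,
\]
together with $g(s/x,t/y) = (xy)^{-n}F$. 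Substituting these in, the powers of $x$ and $y$ all cancel; using also $(1-s/x)(1-t/y)\,xy = (x-s)(y-t) = (s-x)(t-y)$ and collecting the factor $st$ out of the last four terms, the recurrence becomes
\[
n A_n(s,t;x,y) = (n-1)(s-x)(t-y)\,F + st\bigl(n^2 + n(x-s)\partial_s + n(y-t)\partial_t + (x-s)(y-t)\partial_s\partial_t\bigr)F .
\]
The first term already agrees with~(\ref{eq:fourvariable}), so it remains to identify $st$ times the bracketed operator applied to $F$ with $stxy(\partial_s+\partial_x)(\partial_t+\partial_y)F$.

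\textbf{Step 2: factor the operator and apply Euler's relation.} The bracketed operator factors as $\bigl(n + (x-s)\partial_s\bigr)\bigl(n + (y-t)\partial_t\bigr)$, the two factors commuting since they involve disjoint variables. Now $F = A_{n-1}(s,t;x,y)$ is homogeneous of degree $n$ in $\{s,x\}$ and, separately, of degree $n$ in $\{t,y\}$: for $\pi\in\Sym_{n-1}$ the exponents of $s$ and $x$ in the corresponding monomial are $\ides(\pi)+1$ and $n-1-\ides(\pi)$, which sum to $n$, and likewise for $t,y$. Hence Euler's identity gives $s\partial_s F + x\partial_x F = nF$ and $t\partial_t F + y\partial_y F = nF$. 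From the first, $\bigl(n+(x-s)\partial_s\bigr)G = x(\partial_s+\partial_x)G$ for any $G$ homogeneous of degree $n$ in $\{s,x\}$; from the second, $\bigl(n+(y-t)\partial_t\bigr)G = y(\partial_t+\partial_y)G$ for any $G$ homogeneous of degree $n$ in $\{t,y\}$. Apply the second to $G=F$ to get $\bigl(n+(y-t)\partial_t\bigr)F = y(\partial_t+\partial_y)F$; this polynomial is still homogeneous of degree $n$ in $\{s,x\}$ (applying $\partial_t+\partial_y$ and multiplying by $y$ leave the $\{s,x\}$-degree unchanged), so the first identity applies to it and yields
\[
\bigl(n+(x-s)\partial_s\bigr)\bigl(n+(y-t)\partial_t\bigr)F = x(\partial_s+\partial_x)\bigl(y(\partial_t+\partial_y)F\bigr) = xy(\partial_s+\partial_x)(\partial_t+\partial_y)F ,
\]
the last step because $\partial_s$ and $\partial_x$ commute with multiplication by $y$ and with $\partial_t+\partial_y$. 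Feeding this back into Step~1 gives~(\ref{eq:fourvariable}); the initial value is immediate, since $A_1(s,t)=st$ forces $A_1(s,t;x,y) = (xy)^2(s/x)(t/y) = stxy$ by~(\ref{eq:homogenized4var}).

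\textbf{Expected difficulty.} Conceptually this is just a two-dimensional version of the homogenization already carried out in the text, so I do not anticipate a genuine obstacle. The steps most prone to slips are the bookkeeping of the powers of $x$ and $y$ in Step~1, and checking that $A_{n-1}(s,t;x,y)$ is bihomogeneous in the two pairs of variables --- the latter being exactly what makes both Euler reductions, and hence the clean factorization in Step~2, legitimate.
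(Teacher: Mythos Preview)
Your proof is correct and follows essentially the same route as the paper's: start from the Carlitz recurrence of Theorem~\ref{thm:Carlitz}, factor the differential operator as $(n+(1-s)\partial_s)(n+(1-t)\partial_t)$ (the paper does this before homogenizing, you after, which is immaterial), and then convert each factor to its homogenized form $x(\partial_s+\partial_x)$, $y(\partial_t+\partial_y)$ via Euler's relation for the bihomogeneous polynomial $A_{n-1}(s,t;x,y)$. The paper's phrase ``each can be replaced by their symmetric two-variable homogenized counterpart'' is precisely your Step~2 made explicit.
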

\begin{proof}

Consider the bivariate recurrence given in Theorem \ref{thm:Carlitz} and
observe that it can be rewritten as 
\[
nA_n(s,t) = \left((n-1)(1-s)(1-t)
 + st\left(n+(1-s)\frac{\partial}{\partial s}\right)
 \left(n + (1-t)\frac{\partial}{\partial t}\right)\right)
 A_{n-1}(s,t).
\]
Now we can make both sides of the equation homogeneous using (\ref{eq:homogenized4var}).
Since the two Eulerian operators act on different variables each of them can be 
replaced by their symmetric two-variable homogenized counterpart and the theorem follows.
\end{proof}

\begin{rem}
The invariance of $A_n(s,t;x,y)$ under the Klein-group action also follows easily 
from recurrence (\ref{eq:fourvariable}) directly. Clearly, $A_1(s,t;x,y) = stxy$ 
is invariant under the action of the group (in fact, it is symmetric) and the 
operator acting on $A_{n}(s,t;x,y)$ denoted by
\begin{equation}
T_{n} = n(s-x)(t-y) + stxy\left(\frac{\partial}{\partial s} + 
 \frac{\partial}{\partial x}\right)\left(\frac{\partial}{\partial t}+
 \frac{\partial}{\partial y}\right). 
\label{eq:Toperator}
\end{equation} itself is invariant under the action of the Klein-group. 
\end{rem}

Finally, Theorem~\ref{thm:fourvariable} allows us to give a (homogenized) restatement of 
Gessel's conjecture:
\begin{conj} 
\[A_n(s,t;x,y) = \sum_{i,j} \gamma_{n,i,j} (stxy)^i(st+xy)^j
(tx+sy)^{n+1-2i-j} ,\]
where $\gamma_{n,i,j} \in \mathbb{N}$ for all $i,j \in \mathbb{N}.$
\end{conj}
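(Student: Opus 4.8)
My plan is to mimic the proof of Theorem~\ref{thm:FS}, with the homogeneous recurrence~(\ref{eq:fourvariable}) in the role that (\ref{eq:homogenizedEulerianRec}) played there. First I would pin down the expansion. Let $U_m\subseteq\mathbb{Q}[s,t,x,y]$ be the space of polynomials that are invariant under the Klein $4$-group $V$ and bihomogeneous of bidegree $(m,m)$, where the bidegree of a monomial $s^at^bx^cy^d$ is $(a+c,\,b+d)$. Every monomial $s^{\ides(\pi)+1}t^{\des(\pi)+1}x^{n-\ides(\pi)}y^{n-\des(\pi)}$ of $A_n(s,t;x,y)$ has bidegree $(n+1,n+1)$, and $A_n(s,t;x,y)$ is $V$-invariant by the Proposition, so $A_n(s,t;x,y)\in U_{n+1}$. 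Counting $V$-orbits on the $(m+1)^2$ monomials of bidegree $(m,m)$ by Burnside's lemma shows that $\dim U_m$ equals the number of monomials $(stxy)^i(st+xy)^j(tx+sy)^{m-2i-j}$ with $i,j\ge 0$ and $2i+j\le m$; and since $stxy$, $st+xy$, $tx+sy$ are algebraically independent (for instance by a Jacobian computation), these monomials are linearly independent and hence a basis of $U_m$. Thus the $\gamma_{n,i,j}$ exist and are uniquely determined rationals, and setting $s=0$ (using $s\mid A_n$) forces $\gamma_{n,0,j}=0$ for all $j$.

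The main step is to compute the action, on a basis monomial $B_{i,j,k}:=(stxy)^i(st+xy)^j(tx+sy)^k$, of the operator $T_{n-1}$ of~(\ref{eq:Toperator}), which by~(\ref{eq:fourvariable}) satisfies $nA_n(s,t;x,y)=T_{n-1}\bigl[A_{n-1}(s,t;x,y)\bigr]$. Three observations make this manageable. First, the second-order part factors, $stxy\bigl(\tfrac{\partial}{\partial s}+\tfrac{\partial}{\partial x}\bigr)\bigl(\tfrac{\partial}{\partial t}+\tfrac{\partial}{\partial y}\bigr)=\mathcal{E}_1\mathcal{E}_2$, where $\mathcal{E}_1=sx\bigl(\tfrac{\partial}{\partial s}+\tfrac{\partial}{\partial x}\bigr)$ and $\mathcal{E}_2=ty\bigl(\tfrac{\partial}{\partial t}+\tfrac{\partial}{\partial y}\bigr)$ are the homogenized Eulerian operator from the proof of Theorem~\ref{thm:FS} applied in the pairs $(s,x)$ and $(t,y)$ respectively, and these commute. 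Second, the identities $sx\cdot ty=stxy$ and $(s+x)(t+y)=(st+xy)+(tx+sy)$ allow the differentiations to be carried out without ever leaving $\mathbb{Q}[stxy,\,st+xy,\,tx+sy]$. Third, the first-order part is governed by $(s-x)(t-y)=(st+xy)-(tx+sy)$. Expanding $T_{n-1}[B_{i,j,k}]$ in the basis and equating coefficients in $nA_n=T_{n-1}[A_{n-1}]$ then produces, with $k:=n+1-2i-j$, a recurrence of the form
\[
\begin{aligned}
n\gamma_{n,i,j}={}&\bigl((n-1)+i(i+k)\bigr)\gamma_{n-1,i,j-1}+\bigl(i(i+j)-(n-1)\bigr)\gamma_{n-1,i,j}\\
&{}+(j+1)(2n-j)\,\gamma_{n-1,i-1,j+1}+(k+1)(2n-k)\,\gamma_{n-1,i-1,j}\\
&{}+(k+1)(k+2)\,\gamma_{n-1,i-1,j-1}+(j+1)(j+2)\,\gamma_{n-1,i-1,j+2}
\end{aligned}
\]
for $n\ge 2$, with $\gamma_{1,1,0}=1$, all other $\gamma_{1,i,j}=0$, and the convention $\gamma_{n,i,j}=0$ for indices outside the range $i,j\ge 0$, $2i+j\le n+1$. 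All the coefficients are integers, so integrality of the $\gamma_{n,i,j}$ follows by induction; this already proves the integrality half of the conjecture.

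The step that does not carry over — and so the conjecture stays open — is positivity. In~(\ref{rec:gamma_ni}) every coefficient is visibly nonnegative and the range constraint keeps the induction inside $\mathbb{N}$; here the term coming from $(n-1)(s-x)(t-y)=(n-1)\bigl((st+xy)-(tx+sy)\bigr)$ contributes the coefficient $i(i+j)-(n-1)$, which is sign-indefinite, so the recurrence above is not positivity-preserving term by term. To run the induction one would need the four ``mixing'' contributions carried by $\mathcal{E}_1\mathcal{E}_2$ to absorb that deficit, a cancellation I do not see how to control directly. I would expect that closing the gap requires either a regrouping of the recurrence in which the negative contribution disappears, or — in the spirit of \citet{SWG83} and \citet{FS74} — an action of a transformation group on $\Sym_n$, perhaps best described by statistics on inversion sequences, under which $\gamma_{n,i,j}$ becomes the number of orbits of a prescribed type. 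Lacking such an idea, the recurrence above is as far as this line of attack reaches.
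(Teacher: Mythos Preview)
Your plan is precisely the paper's: act with $T_n$ on the basis elements $B^{(n)}_{i,j}=(stxy)^i(st+xy)^j(tx+sy)^{n+1-2i-j}$, read off a recurrence for the $\gamma_{n,i,j}$, and see whether nonnegativity survives. Once you substitute $k=n+1-2i-j$ back, your recurrence is exactly the paper's Theorem in Section~\ref{sec:recurrencegamma}, and your diagnosis that the term $(i(i+j)-(n-1))\gamma_{n-1,i,j}$ spoils term-by-term positivity matches the paper's own conclusion that ``these properties are not immediate from the recurrence''. The extras you supply---the Burnside/Jacobian argument that $\{B^{(n)}_{i,j}\}$ really is a basis of the $V$-invariant bihomogeneous space, and the pleasant factorisation $stxy(\partial_s+\partial_x)(\partial_t+\partial_y)=\mathcal{E}_1\mathcal{E}_2$ into two commuting copies of the homogenized Eulerian operator---are not in the paper and are worth keeping.

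There is, however, one overclaim you should retract. You write that ``all the coefficients are integers, so integrality of the $\gamma_{n,i,j}$ follows by induction; this already proves the integrality half of the conjecture.'' It does not. The recurrence has the shape
\[
n\,\gamma_{n,i,j}=\text{(integer combination of the }\gamma_{n-1,\cdot,\cdot}\text{)},
\]
so induction only yields $n\gamma_{n,i,j}\in\mathbb{Z}$, and you have given no reason why the right-hand side should always be divisible by $n$. The paper is careful here: it says explicitly that neither nonnegativity \emph{nor} integrality is immediate from the recurrence. So as it stands your argument settles neither half, and the statement remains a conjecture.
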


For example, we have (cf. page 18 of \cite{Pet12}):
\begin{align*}
A_1(s,t;x,y) &= stxy \\
A_2(s,t;x,y) &= stxy(st+xy) \\
A_3(s,t;x,y) &= stxy(st+xy)^2 +2(stxy)^2\\
A_4(s,t;x,y) &= stxy(st+xy)^3 + 7(stxy)^2(st+xy) + (stxy)^2(tx+sy)\\
A_5(s,t;x,y) &= stxy(st+xy)^4+ 16(stxy)^2(st+xy)^2 +6(stxy)^2(st+xy)(tx+sy) +16(stxy)^3.
\end{align*}

\begin{rem}
It is not too hard to see that Theorem~\ref{thm:fourvariable} is, in fact, equivalent 
to Theorem~\ref{thm:Carlitz}. At the same time, the symmetric nature of the 
homogeneous operator is more suggestive to combinatorial interpretation. 
It would be nice to find such an interpretation (perhaps in terms of non-attacking 
rook placements on a rectangular board). 
\end{rem}

\medskip

\section{A recurrence for the coefficients $\gamma_{n,i,j}$}
\label{sec:recurrencegamma}

Following the ideas of \citet[Chapitre V]{FS70} that were used to devise a recurrence
for $\gamma_{n,i}$, we apply the operator $T_n$ to the basis elements to obtain a 
recurrence for the coefficients $\gamma_{n,i,j}$. As a result, we obtain the following recurrence.

\begin{thm} Let $n\ge 1$. For all $i\ge1$ and $j \ge 0$, we have
\begin{equation}
\begin{split}
(n+1) \gamma_{n+1,i,j} =&\quad (n+i(n+2-i-j))\gamma_{n,i,j-1} 
+ (i(i+j)-n) \gamma_{n,i,j} \\&+ 
(n+4-2i-j)(n+3-2i-j) \gamma_{n, i-1,j-1}+\\&+
(n+2i+j)(n+3-2i-j)\gamma_{n, i-1,j}  \\&+
(j+1)(2n+2-j)\gamma_{n,i-1,j+1} + (j+1)(j+2)\gamma_{n,i-1,j+2}\, ,
\end{split}
\label{rec:gamma_nij}
\end{equation}
with $\gamma_{1,1,0} = 1$, $\gamma_{1,i,j} = 0$ (unless $i=1$ and $j=0$) and 
$\gamma_{n,i,j} = 0$ if $i < 1$ or $j < 0$.
\end{thm}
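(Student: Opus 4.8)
The plan is to mimic exactly the proof of Theorem~\ref{thm:FS}: expand both sides of the homogeneous recurrence~\eqref{eq:fourvariable} in the conjectural basis and match coefficients. Concretely, write $B_{i,j} = (stxy)^i(st+xy)^j(tx+sy)^{n+1-2i-j}$ for the basis element attached to the triple $(n,i,j)$ (so that $A_n = \sum_{i,j}\gamma_{n,i,j}B_{i,j}$ is the homogenized form of Gessel's conjecture, though here we only need the \emph{existence and uniqueness} of such an expansion of $A_n(s,t;x,y)$ in this basis over $\mathbb{Q}$, which follows from the Klein-$4$ invariance and a dimension count on the space of invariant homogeneous polynomials of degree $2n+2$). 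The identity $(n+1)A_{n+1}(s,t;x,y) = T_n\bigl[A_n(s,t;x,y)\bigr]$ then reduces, by linearity, to computing $T_n[B_{i,j}]$ and re-expanding the result in the degree-$(2n+2)$ basis $\{(stxy)^{i'}(st+xy)^{j'}(tx+sy)^{2n+3-2i'-j'}\}$.

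First I would record the three abbreviations $p = stxy$, $q = st+xy$, $r = tx+sy$ and note the algebraic relations among them and the partial derivatives — in particular $\partial p/\partial s = txy$, $\partial q/\partial s = t$, $\partial r/\partial s = y$, and similarly for the other variables — so that the combined operators $D_1 = \partial/\partial s + \partial/\partial x$ and $D_2 = \partial/\partial t + \partial/\partial y$ act on $p,q,r$ by clean rules: one computes $D_1 p = (tx+sy)(\cdot)+\dots$ type expressions, and crucially $stxy\,D_1D_2$ applied to a monomial $p^aq^br^c$ produces a $\mathbb{Z}$-linear combination of $p^{a}q^{b+?}r^{c+?}$ with shifts that raise the total degree by exactly $2$. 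The ``multiplier'' piece $n(s-x)(t-y) = n(st - sy - tx + xy) = n(q - r)$ is the pleasant surprise that makes everything work: $(s-x)(t-y) = st+xy-sy-tx = q-r$, so $n(s-x)(t-y)B_{i,j} = n\,p^i q^{j+1} r^{n+1-2i-j} - n\,p^i q^j r^{n+2-2i-j}$, already in the target basis. So the whole computation is: expand $stxy\,D_1D_2[p^iq^jr^{n+1-2i-j}]$ into the $(p,q,r)$-basis, add $n(q-r)$ times the original, collect the coefficient of $p^{i'}q^{j'}r^{n+2-2i'-j'}$ for each $(i',j')$, and read off the recurrence.

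The main obstacle — really the only nontrivial step — is the bookkeeping in expanding $stxy\,D_1D_2[p^iq^jr^{m}]$ with $m = n+1-2i-j$. Applying $D_2$ to $p^iq^jr^m$ by the product/chain rule gives three terms (one lowering an exponent on $p$, $q$, or $r$ and introducing the derivative $D_2p$, $D_2q$, or $D_2r$); applying $D_1$ again produces nine terms; multiplying by $stxy = p$ and then re-expressing the residual polynomials $D_1p$, $D_2p$, $D_1(D_2p)$, and the various products $D_1r\cdot D_2r$, $D_1q\cdot D_2q$, etc., back in terms of $p,q,r$ is where care is needed, because expressions like $(txy)(sxy) = xy\cdot x y\cdot st = \dots$ must be rewritten using $p^2 = (stxy)^2$ and identities such as $q r = (st+xy)(tx+sy)$ and $q^2 - r^2$, $t^2x^2 + s^2y^2 = r^2 - 2p$, $s^2t^2 + x^2y^2 = q^2 - 2p$. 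I would organize this by first proving a short lemma giving $p\,D_1D_2[p]$, $p\,D_1D_2[q]$, $p\,D_1D_2[r]$ and the ``cross'' contributions $D_1 a\cdot D_2 b$ for $a,b\in\{p,q,r\}$ as explicit $\mathbb{Z}$-combinations of $p,q,r$, then assemble the general monomial case mechanically. Matching the coefficient of $p^{i}q^{j}r^{\,n+2-2i-j}$ (shift $i'=i$, i.e.\ contributions from $(i,j-1)$ and $(i,j)$) against the left side gives the first two terms of~\eqref{rec:gamma_nij}; matching $p^{i}q^{j+1}r^{\dots}$ etc. from $B_{i-1,\bullet}$ (the degree-raising terms) gives the remaining four. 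The final step is the routine verification that the stated initial data $\gamma_{1,1,0}=1$ and the boundary conventions $\gamma_{n,i,j}=0$ for $i<1$ or $j<0$ are consistent with the recurrence, which is immediate.

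One caveat worth flagging in the write-up: unlike the type-$A$ case, here the basis $\{p^iq^jr^{n+1-2i-j}\}$ is genuinely a basis of the relevant invariant subspace only after checking it is linearly independent and has the right cardinality; and the recurrence~\eqref{rec:gamma_nij} by itself does \emph{not} obviously preserve nonnegativity (the coefficient $i(i+j)-n$ of $\gamma_{n,i,j}$ can be negative), which is precisely why this theorem falls short of proving Gessel's conjecture — I would state the recurrence and verify it, and explicitly remark that the sign obstruction in the $\gamma_{n,i,j}$ term is the reason the same ``positivity is preserved'' argument that finished Theorem~\ref{thm:FS} does not go through here.
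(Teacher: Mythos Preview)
Your proposal is correct and follows essentially the same approach as the paper: compute $T_n$ on a basis element $B^{(n)}_{i,j}=(stxy)^i(st+xy)^j(tx+sy)^{n+1-2i-j}$, re-express in the degree-$(2n+4)$ basis, and read off the recurrence. The only organisational difference is that the paper splits the differential piece as $stxy(\partial_s\partial_t+\partial_x\partial_y)+stxy(\partial_s\partial_y+\partial_t\partial_x)$ and computes these two symmetric halves separately (each yielding three basis elements), whereas you keep $D_1D_2$ intact and work in the $p,q,r$ coordinates; both routes arrive at the same six-term expansion of $T_n[B^{(n)}_{i,j}]$, and your observation $(s-x)(t-y)=q-r$ is exactly the paper's first displayed identity.
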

\begin{proof}
Denote the basis elements by $B^{(n)}_{i,j} = (stxy)^i(st+xy)^j(tx+sy)^{n+1-2i-j}$ for convenience, and recall the definition of $T_n$ given in (\ref{eq:Toperator}).

A quick calculation shows that
\begin{equation}
n(s-x)(t-y)B^{(n)}_{i,j} = n \left(B^{(n+1)}_{i,j+1}- B^{(n+1)}_{i,j}\right) \,.
\label{eq:M}
\end{equation}

To calculate the action of the differential operators on the basis elements, we use the product rule,
which for second-order partial derivatives is given by the following formula:
\[
\begin{split}
\partial_{zw}(fgh) =& \quad \partial_{zw}(f)gh+\partial_{z}(f)\partial_{w}(g)h+\partial_{z}(f)g\partial_{w}(h) \\
&+\partial_{w}(f)\partial_z(g)h+f\partial_{zw}(g)h+f\partial_z(g)\partial_{w}(h) \\
&+\partial_w(f)g\partial_{z}(h)+f\partial_{w}(g)\partial_z(h)+fg\partial_{zw}(h)\, ,
\end{split}
\] where $f,g,h$ are functions, $\partial_z = \partial/\partial z$ and $\partial_w = \partial/\partial w$
denote the partial differential operators with respect to $z$ and $w$, and $\partial_{zw} = \partial_z\partial_w$ 
is the second-order differential operator.

After some calculations, this gives the following:
\begin{equation}
\begin{split}
stxy\left(\frac{\partial^2}{\partial s \partial t} + \frac{\partial^2}{\partial x \partial y}\right)
B^{(n)}_{i,j} &= i(n+1-i-j)B^{(n+1)}_{i,j+1}+ j(2n+3-j) B^{(n+1)}_{i+1,j-1}\\
&\quad+ (n+1-2i-j)(n-2i-j)B^{(n+1)}_{i+1,j+1}\,.
\end{split}
\label{eq:D1}
\end{equation}

\begin{equation}
\begin{split}
stxy\left(\frac{\partial^2}{\partial s \partial y} + \frac{\partial^2}{\partial t \partial x}\right)B^{(n)}_{i,j} &= i(i+j)B^{(n+1)}_{i,j} + j(j-1) B^{(n+1)}_{i+1,j-2}+\\&\quad
(n+1-2i-j)(n+2+2i+j)B^{(n+1)}_{i+1,j}\,.
\end{split}
\label{eq:D2}
\end{equation}

Summing (\ref{eq:M}), (\ref{eq:D1}) and 
(\ref{eq:D2}) we arrive at the following expression.
\[ 
\begin{split}
T_n[B^{(n)}_{i,j}] = &(n+i(n+1-i-j)) B^{(n+1)}_{i,j+1} + 
(i(i+j)-n) B^{(n+1)}_{i,j} \\ 
&+ (n+1-2i-j)(n-2i-j)B^{(n+1)}_{i+1,j+1} + 
(n+2+2i+j)(n+1-2i-j)B^{(n+1)}_{i+1,j} \\
&+ j(2n+3-j) B^{(n+1)}_{i+1,j-1}+ 
j(j-1) B^{(n+1)}_{i+1,j-2}.
\end{split}
\]

Finally, collecting together all terms $T_n[B^{(n)}_{k,\ell}]$ which 
contribute to $B^{(n+1)}_{i,j}$ we obtain (\ref{rec:gamma_nij}).

\end{proof}

\begin{rem} If we sum up both sides of (\ref{rec:gamma_nij}) for all
possible $j$ then we get (\ref{rec:gamma_ni}) back. 
\end{rem}

One could study the generating function 
\[G(u,v,w) = \sum_{i,j}\gamma_{n,i,j} u^n v^{i} w^{j}\] with coefficients satisfying
the above recurrence. Gessel's conjecture is equivalent to saying that its coefficients are 
nonnegative integers. Unfortunately, these properties are not immediate from the
recurrence (\ref{rec:gamma_nij}).

\section{Generalizations of the conjecture}

\cite{Ges12} noted that the following equality of \citet{CRS66}
\[ \sum_{i,j=0}^\infty \binom{ij+n-1}{n} s^it^j = 
\frac{A_n(s,t)}{(1-s)^{n+1}(1-t)^{n+1}}\]
can be generalized as follows. 

Let $\tau \in \mathfrak{S}_n$ with $\des(\tau) = k-1.$
Define $A_n^{(k)}(t)$ by
\[ \sum_{i,j=0}^\infty \binom{ij+n-k}{n} s^it^j = 
\frac{A^{(k)}_n(s,t)}{(1-s)^{n+1}(1-t)^{n+1}}.\]
Then the coefficient of $s^it^j$ in $A_n^{(k)}$ is the number of pairs of
permutations $(\pi,\sigma)$ such that $\pi\sigma = \tau$, $\des(\pi) = i$
and $\des(\sigma) = j$. \cite{Ges12} also pointed out that these polynomials
arise implicitly in \cite{MP70}; compare (11.10) there with the above equation.

This suggests that Conjecture~\ref{conj:gessel} holds in a more general form 
(this version of the conjecture appeared as Conjecture 10.2 in \cite{Bra08}).
\begin{conj}[Gessel]
\label{conj:gesseltau}
Let $\tau \in \mathfrak{S}_n$. Then 
\[\sum_{\pi \in \mathfrak{S}_n} s^{\des(\pi)+1}t^{\des(\pi^{-1}\tau)+1} = 
\sum_{i,j} \gamma^{\tau}_{n,i,j} (st)^i (s+t)^j  (1+st)^{n+1-j-2i}\,,\]
where $\gamma^{\tau}_{n,i,j}$ are nonnegative integers for all $i,j \in \mathbb{N}.$
Furthermore, the coefficients $\gamma^{\tau}_{n,i,j}$ do not depend on the actual
permutation $\tau$, only on the number of descents in $\tau$.
\end{conj}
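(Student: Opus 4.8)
The plan is to separate the statement into its two parts, handling the ``furthermore'' clause first. Put $k=\des(\tau)+1$ and $F_\tau(s,t)=\sum_{\pi\in\Sym_n}s^{\des(\pi)+1}t^{\des(\pi^{-1}\tau)+1}$. Since $\pi\mapsto(\pi,\pi^{-1}\tau)$ is a bijection of $\Sym_n$ onto $\{(\pi,\sigma):\pi\sigma=\tau\}$, the coefficient of $s^{i+1}t^{j+1}$ in $F_\tau$ is the number of pairs $(\pi,\sigma)$ with $\pi\sigma=\tau$, $\des\pi=i$, $\des\sigma=j$, so by the generalization, noted by \citet{Ges12}, of the identity of \citet{CRS66} we have $F_\tau(s,t)=st\cdot A^{(k)}_n(s,t)$, where $A^{(k)}_n$ is determined by $\sum_{i,j\ge0}\binom{ij+n-k}{n}s^it^j=A^{(k)}_n(s,t)/\bigl((1-s)^{n+1}(1-t)^{n+1}\bigr)$. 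The left-hand side here is symmetric in $s,t$ and depends on $\tau$ only through $k$, so $F_\tau$ is a symmetric polynomial depending only on $n$ and $k$; as the expansion in the family $\{(st)^i(s+t)^j(1+st)^{n+1-j-2i}\}$ is unique, the $\gamma^{\tau}_{n,i,j}$ are well defined and depend only on $k$. The whole content is thus to prove $\gamma^{\tau}_{n,i,j}\in\mathbb N$, and for this the natural route is to redo Section~\ref{sec:recurrencegamma} with $\tau$ in place of the identity.

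Repeating Carlitz's derivation of Theorem~\ref{thm:Carlitz}, but starting from the elementary identity $n\binom{ij+n-k}{n}=(ij+n-k)\binom{ij+(n-1)-k}{n-1}$, one obtains for $2\le k\le n-1$ the recurrence
\[
nA^{(k)}_n(s,t)=\Bigl[(n-k)(1-s)(1-t)+st\bigl(n+(1-s)\tfrac{\partial}{\partial s}\bigr)\bigl(n+(1-t)\tfrac{\partial}{\partial t}\bigr)\Bigr]A^{(k)}_{n-1}(s,t),
\]
i.e.\ the recurrence rewritten in the proof of Theorem~\ref{thm:fourvariable} with $(n-1)$ replaced by $(n-k)$; the remaining case $k=n$ reduces to $k=1$ through the reciprocity $A^{(n+1-k)}_n(s,t)=t^{n+1}A^{(k)}_n(s,1/t)$ coming from the bijection $(\pi,\sigma)\mapsto(\pi,\sigma w_0)$ with $w_0$ the longest element. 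Homogenizing in four variables as in (\ref{eq:homogenized4var}) turns this into $nA^{(k)}_n(s,t;x,y)=T^{(k)}_{n-1}\,A^{(k)}_{n-1}(s,t;x,y)$ with the Klein-invariant operator
\[
T^{(k)}_{n-1}=(n-k)(s-x)(t-y)+stxy\Bigl(\frac{\partial}{\partial s}+\frac{\partial}{\partial x}\Bigr)\Bigl(\frac{\partial}{\partial t}+\frac{\partial}{\partial y}\Bigr),
\]
which differs from $T_{n-1}$ of (\ref{eq:Toperator}) only in the scalar multiplying $(s-x)(t-y)$. Applying $T^{(k)}_{n-1}$ to the basis elements $B^{(n-1)}_{i,j}$ therefore reuses the computations (\ref{eq:M})--(\ref{eq:D2}) verbatim, save that the scalar contributed by (\ref{eq:M}) is $n+1-k$ rather than $n$ at the $n\to n+1$ step; collecting terms, recurrence (\ref{rec:gamma_nij}) holds for $\gamma^{\tau}_{n+1,i,j}$ with the lone summand $+n$ in the coefficient of $\gamma_{n,i,j-1}$ replaced by $+(n+1-k)$ and the lone summand $-n$ in the coefficient of $\gamma_{n,i,j}$ replaced by $-(n+1-k)$. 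For $k=1$ (that is, $\tau=\mathrm{id}$) this is exactly (\ref{rec:gamma_nij}), as it must be, and the range constraint $1\le i$, $j\ge0$, $2i+j\le n+1$ already forces, by induction on $n$, the vanishing of $\gamma^{\tau}_{n,i,j}$ outside that range; the base data, at $n=k$, is $A^{(k)}_k(s,t)=t^{k+1}A_k(s,1/t)$.

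The hard part is the last step: showing that this recurrence yields \emph{nonnegative} (and integral) values. Nothing in it makes this clear --- already in (\ref{rec:gamma_nij}) coefficients such as $i(i+j)-n$, now $i(i+j)-(n+1-k)$, take both signs, and integrality is not manifest either since the right side must be divisible by $n+1$ --- so a bare induction on $n$ does not close, and this is precisely the obstruction that leaves Conjecture~\ref{conj:gessel}, the case $\tau=\mathrm{id}$, open: the general-$\tau$ statement is at least as hard, and indeed the base case above is itself an instance of Conjecture~\ref{conj:gessel}. What one would really want, in the spirit of the valley-hopping proof of Theorem~\ref{thm:FS} by \citet*{SWG83} and the transformation-group actions of \citet{FS74}, is an action of a $(\mathbb Z/2)$-type group on $\{(\pi,\sigma)\in\Sym_n^2:\pi\sigma=\tau\}$ under which every orbit has a fixed ``skeleton'' and the pair $(\des\pi,\des\sigma)$ ranges over the orbit so as to sweep out a single monomial $(st)^i(s+t)^j(1+st)^{n+1-2i-j}$, making $\gamma^{\tau}_{n,i,j}$ the number of orbits of a prescribed shape. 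The difficulty, present already for $\tau=\mathrm{id}$, is twofold: the constraint $\pi\sigma=\tau$ couples the two descent statistics, so an involution designed to control $\des\pi$ acts uncontrollably on $\des(\pi^{-1}\tau)$; and one must simultaneously produce the genuinely two-variable factor $(s+t)^j$, which has no analogue in Theorem~\ref{thm:FS}. Short of such a model, reasonable intermediate targets are to verify that summing the $\gamma^{\tau}_{n,i,j}$-recurrence over $j$ returns (\ref{rec:gamma_ni}), hence Theorem~\ref{thm:FS}, and to exploit relations such as $A^{(k+1)}_n=A^{(k)}_n-(1-s)(1-t)A^{(k)}_{n-1}$ together with the reciprocity $k\leftrightarrow n+1-k$ to reduce the range of $\tau$ for which a combinatorial model must be found.
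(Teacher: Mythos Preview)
The statement you are attempting is labeled a \emph{conjecture} in the paper, and the paper offers no proof of it; its special case $\tau=\mathrm{id}$ is Conjecture~\ref{conj:gessel}, which the paper explicitly leaves open. So there is no ``paper's own proof'' to compare against.

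Your treatment of the ``furthermore'' clause matches what the paper indicates via \citet{Ges12}: the generating-function definition of $A_n^{(k)}$ depends only on $k=\des(\tau)+1$, hence so does $F_\tau$ and any expansion coefficients extracted from it. Your derivation of a $k$-generalized recurrence --- replacing the scalar $n-1$ by $n-k$ in the operator of Theorem~\ref{thm:fourvariable} and correspondingly the lone $\pm n$ by $\pm(n+1-k)$ in~(\ref{rec:gamma_nij}) --- goes beyond what the paper writes out but is entirely in the spirit of Section~\ref{sec:recurrencegamma}, and the binomial identity you start from is correct.

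Where you yourself flag a gap is precisely where the conjecture is open: the resulting recurrence, like~(\ref{rec:gamma_nij}), has coefficients of both signs and a non-obvious divisibility by $n+1$, so neither nonnegativity nor integrality follows by induction on $n$. Your proposal is therefore not a proof of Conjecture~\ref{conj:gesseltau} but a correct reduction of it to the same obstruction that blocks Conjecture~\ref{conj:gessel}, together with some useful auxiliary structure (the reciprocity $k\leftrightarrow n+1-k$ and the relation $A^{(k+1)}_n=A^{(k)}_n-(1-s)(1-t)A^{(k)}_{n-1}$). That is consistent with the paper's own assessment; a proof would require a genuinely new idea, most likely the combinatorial orbit model you sketch at the end.
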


In the special case when $\tau = n (n-1) \dotso 2 1$ (and hence $\des(\tau) = n-1$) the roles
of descents and ascents interchange.
\begin{thm} For $n\ge 2$, 
\begin{equation}
\begin{split}
nA^{(n)}_n(s,t;x,y) = &(n-1)(x-s)(t-y)A^{(n-1)}_{n-1}(s,t;x,y)\\
 &+ stxy\left(\frac{\partial}{\partial s} + 
 \frac{\partial}{\partial x}\right)\left(\frac{\partial}{\partial t}+
 \frac{\partial}{\partial y}\right)A^{(n-1)}_{n-1}(s,t;x,y)
\end{split}
\end{equation}
with initial value $A^{(1)}_1(s,t;x,y) = stxy$.
\end{thm}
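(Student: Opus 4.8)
The plan is to deduce the recurrence from Theorem~\ref{thm:fourvariable} by the change of variables corresponding to the fact that right multiplication by the longest element $\tau_0 = n(n-1)\dotsm 21 \in \Sym_n$ interchanges ascents and descents. Since $\des(\tau_0)=n-1$, the polynomial attached to $\tau_0$ is $A^{(n)}_n$, and by definition
\[
A^{(n)}_n(s,t) = \sum_{\substack{\pi,\sigma \in \Sym_n\\ \pi\sigma=\tau_0}} s^{\des(\pi)+1}t^{\des(\sigma)+1}
= \sum_{\pi \in \Sym_n} s^{\des(\pi)+1}t^{\des(\pi^{-1}\tau_0)+1}.
\]
Substituting $\rho=\pi^{-1}$ and using that the one-line word of $\rho\tau_0$ is obtained from that of $\rho$ by reversal (hence $\des(\rho\tau_0)=\asc(\rho)$), we get $A^{(n)}_n(s,t)=\sum_{\rho}s^{\ides(\rho)+1}t^{\asc(\rho)+1}$. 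Comparing with the definition of $A_n(s,t;x,y)$, in which the variable $t$ marks descents and $y$ marks ascents, the correct homogeneous lift of $A^{(n)}_n$ (homogeneous of degree $2n+2$ and reducing to $A^{(n)}_n(s,t)$ at $x=y=1$) is
\[
A^{(n)}_n(s,t;x,y) := A_n(s,y;x,t) = \sum_{\rho \in \Sym_n} s^{\ides(\rho)+1}t^{\asc(\rho)+1}x^{\iasc(\rho)+1}y^{\des(\rho)+1};
\]
that is, $A^{(n)}_n(s,t;x,y)$ is $A_n(s,t;x,y)$ with the variables $t$ and $y$ interchanged. The same identity holds with $n$ replaced by $n-1$, where $\tau_0$ is now the longest element of $\Sym_{n-1}$ (with $n-2$ descents), which is why $A^{(n-1)}_{n-1}$ appears on the right-hand side.

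With the lift identified, the theorem follows by transporting recurrence~(\ref{eq:fourvariable}) through the substitution $t\leftrightarrow y$. On the right-hand side of~(\ref{eq:fourvariable}) the monomial $stxy$ is fixed by this swap, and so is the operator $\left(\frac{\partial}{\partial s}+\frac{\partial}{\partial x}\right)\left(\frac{\partial}{\partial t}+\frac{\partial}{\partial y}\right)$ (all four partials commute, and the swap merely exchanges the two summands of the second factor); only the linear coefficient changes, and $(s-x)(t-y)\mapsto (s-x)(y-t)=(x-s)(t-y)$ — precisely the sign flip in the statement. Applying $t\leftrightarrow y$ to both sides of~(\ref{eq:fourvariable}) and invoking $A_n(s,t;x,y)\big|_{t\leftrightarrow y}=A^{(n)}_n(s,t;x,y)$ together with $A_{n-1}(s,t;x,y)\big|_{t\leftrightarrow y}=A^{(n-1)}_{n-1}(s,t;x,y)$ yields the claimed recurrence, and $A^{(1)}_1(s,t;x,y)=A_1(s,y;x,t)=stxy$ gives the initial condition.

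I do not expect a serious obstacle here. The one point requiring care is purely bookkeeping: the statement presupposes a four-variable polynomial $A^{(n)}_n(s,t;x,y)$, so one must first pin down its definition (via the pair-of-permutations / generating-function description) and check that, under whatever homogenization convention is adopted, it coincides with $A_n(s,t;x,y)$ after swapping $t$ and $y$ — equivalently, that right multiplication by the longest element interchanges the descent- and ascent-marking variables (any other convention gives an equivalent answer, by the Klein-group symmetry of $A_n(s,t;x,y)$). Once this is in place the result is an immediate specialization of Theorem~\ref{thm:fourvariable}; in particular there is no need to redo the permutation-insertion argument underlying~(\ref{eq:fourvariable}), nor to establish a separate Carlitz-type recurrence for $A^{(n)}_n(s,t)$.
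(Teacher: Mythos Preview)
Your proposal is correct and matches the paper's approach. The paper does not give an explicit proof: it only remarks that for $\tau=n(n-1)\dotsm21$ ``the roles of descents and ascents interchange,'' states the theorem, and then records the identity $A^{(n)}_n(s,t;x,y)=A_n(s,y;x,t)$ as a corollary. You have simply reversed the order of presentation---establishing the identity first and then transporting recurrence~(\ref{eq:fourvariable}) through the swap $t\leftrightarrow y$---which is an entirely equivalent (and arguably cleaner) way to package the same one-line observation.
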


In particular, we have the following identity.
\begin{cor}
\[A^{(n)}_n(s,t;x,y) = A_n(s,y;x,t).\] 
\end{cor}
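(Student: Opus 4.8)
The plan is to derive the corollary directly from the preceding theorem by showing that both sides satisfy the same recurrence with the same initial value. First I would record the base case: at $n=1$ the left side is $A^{(1)}_1(s,t;x,y) = stxy$ by definition, while the right side is $A_1(s,y;x,t) = stxy$ (since $A_1(s,t;x,y)=stxy$ is symmetric), so they agree. Then I would take the known recurrence for $A_n(s,t;x,y)$ from Theorem~\ref{thm:fourvariable} and perform the substitution $t \leftrightarrow y$ throughout it; under this swap the operator $stxy(\partial_s+\partial_x)(\partial_t+\partial_y)$ is carried to $sxyt(\partial_s+\partial_x)(\partial_y+\partial_t)$, i.e.\ it is unchanged, while the prefactor $(s-x)(t-y)$ becomes $(s-x)(y-t) = -(s-x)(t-y) = (x-s)(t-y)$. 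Hence $B_n(s,t;x,y) := A_n(s,y;x,t)$ satisfies
\[
n B_n(s,t;x,y) = (n-1)(x-s)(t-y) B_{n-1}(s,t;x,y) + stxy\left(\frac{\partial}{\partial s}+\frac{\partial}{\partial x}\right)\left(\frac{\partial}{\partial t}+\frac{\partial}{\partial y}\right) B_{n-1}(s,t;x,y),
\]
which is exactly the recurrence stated for $A^{(n)}_n(s,t;x,y)$ in the theorem just above the corollary.

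Next I would invoke uniqueness: the recurrence in the theorem expresses $n A^{(n)}_n$ in terms of $A^{(n-1)}_{n-1}$, so together with the initial value $A^{(1)}_1 = stxy$ it determines the whole sequence $\bigl(A^{(n)}_n(s,t;x,y)\bigr)_{n\ge 1}$ uniquely. Since $B_n = A_n(s,y;x,t)$ obeys the same recurrence and the same initial value, induction on $n$ gives $A^{(n)}_n(s,t;x,y) = A_n(s,y;x,t)$ for all $n\ge 1$, which is the claim. The only subtlety worth a sentence is that one must check the substitution $t\leftrightarrow y$ genuinely commutes with the differential operators appearing in the recurrence — but this is clear because $\partial/\partial t$ and $\partial/\partial y$ are simply interchanged by the swap, and the operator is symmetric in the two factors $(\partial_s+\partial_x)$ and $(\partial_t+\partial_y)$ only after the swap sends the second factor to itself.

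The main (and essentially only) obstacle is bookkeeping the sign: one has to be careful that the prefactor $(s-x)(t-y)$ in Theorem~\ref{thm:fourvariable} really does pick up the minus sign that converts it into the $(x-s)(t-y)$ appearing in the $\tau = n(n-1)\cdots 21$ theorem, and that no other sign or variable is disturbed. Everything else is a routine verification that a solution of a first-order (in $n$) recurrence is pinned down by its initial term. I would therefore present the proof as: (i) base case; (ii) apply $t\leftrightarrow y$ to the recurrence of Theorem~\ref{thm:fourvariable} and observe it becomes the recurrence of the preceding theorem; (iii) conclude by uniqueness/induction.
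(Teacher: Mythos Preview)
Your argument is correct and is exactly the intended one: the paper states the corollary without proof, but its placement immediately after the recurrence for $A^{(n)}_n(s,t;x,y)$ signals that the point is precisely to compare that recurrence with the one in Theorem~\ref{thm:fourvariable} under the swap $t\leftrightarrow y$, together with the matching initial value $stxy$. Your verification of the sign on $(s-x)(t-y)$ and the invariance of the differential part is the whole content.
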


\subsection{A type B analog}
\citet{Ges12} also noted that there is an analogous definition for the hyperoctahedral
group $\mathfrak{B}_n$. The elements of $\mathfrak{B}_n$ can be thought of as
signed permutations of $\{1, \dotsc, n\}$, and the type $B$ descents  are defined as
$\des_B(\sigma) = \{ i \in \{0,1, \dotsc, n\} : \sigma_i > \sigma_{i+1}\}$ with
$\sigma_0 := 0$ for 
$\sigma = \sigma_1\dotso\sigma_n \in \mathfrak{B}_n$.

\[ \sum_{i,j=0}^\infty \binom{2ij+i+j +1+n-k}{n} s^it^j = 
\frac{B^{(k)}_n(s,t)}{(1-s)^{n+1}(1-t)^{n+1}}\,,\]
where 
\[B^{(k)}_n(s,t) = \sum_{\sigma\in \mathfrak{B}_n} 
s^{\des_B(\sigma)}t^{\des_B(\sigma^{-1}\tau)},\]
with $\tau \in \mathfrak{B}_n$ such that $\des_B(\tau) = k-1$ (here 
$\des_B$ denotes the descents of type $B$).

Therefore, mimicking the proof of Theorem~\ref{thm:Carlitz} given by 
\citet{Pet12}, we get an analog of 
Theorem~\ref{thm:Carlitz} for the type $B$ 
two-sided Eulerian polynomials, $B_n(s,t) = B_n^{(1)}(s,t)$
\begin{thm} For $n\ge 2$, 
\begin{equation}
\begin{split}
nB_n(s,t) = &(2n^2st -nst + n)B_{n-1}(s,t)\\ &+  
(2nst(1-s) + s(1-s)(1-t))\frac{\partial}{\partial s}B_{n-1}(s,t)\\ &+
(2nst(1-t) + t(1-s)(1-t)) \frac{\partial}{\partial t}B_{n-1}(s,t)\\ &+
2st(1-s)(1-t)\frac{\partial^2}{\partial s\partial t}B_{n-1}(s,t)\,.
\end{split}
\end{equation}
with initial value $B_1(s,t) = 1+st$.
\end{thm}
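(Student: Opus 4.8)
The plan is to imitate the derivation of Theorem~\ref{thm:Carlitz} by \citet{Pet12}, now starting from the type~$B$ binomial-sum identity displayed above in place of the type~$A$ one. Specializing that identity to $k=1$ and abbreviating $m:=2ij+i+j$, we have
\[
f_n(s,t):=\sum_{i,j\ge 0}\binom{m+n}{n}s^it^j=\frac{B_n(s,t)}{(1-s)^{n+1}(1-t)^{n+1}},
\]
\[
g_{n-1}(s,t):=\sum_{i,j\ge 0}\binom{m+n-1}{n-1}s^it^j=\frac{B_{n-1}(s,t)}{(1-s)^{n}(1-t)^{n}},
\]
so it suffices to relate $f_n$ to $g_{n-1}$ by a differential operator and then substitute. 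Note that the recurrence stays within the family $B_n=B^{(1)}_n$: passing from $(n,k)=(n,1)$ to $(n-1,1)$ sends $\binom{m+n}{n}$ to $\binom{m+n-1}{n-1}$, so no other polynomials $B^{(k)}_{n-1}$ intervene.

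First I would manipulate the binomial coefficient. From $\binom{m+n}{n}=\tfrac{m+n}{n}\binom{m+n-1}{n-1}$ we get $n\binom{m+n}{n}=(m+n)\binom{m+n-1}{n-1}$, hence
\[
n\,f_n=n\,g_{n-1}+\sum_{i,j\ge 0}(2ij+i+j)\binom{m+n-1}{n-1}s^it^j .
\]
Now I would use the elementary fact that on a formal power series $h=\sum c_{ij}s^it^j$ the operators $s\,\partial_s$, $t\,\partial_t$ and $st\,\partial_s\partial_t$ multiply the $(i,j)$-th term by $i$, $j$ and $ij$ respectively; the right-hand sum is therefore $\bigl(2st\,\partial_s\partial_t+s\,\partial_s+t\,\partial_t\bigr)g_{n-1}$, which yields the clean operator recurrence
\[
n\,f_n=\Bigl(2st\,\frac{\partial^2}{\partial s\,\partial t}+s\,\frac{\partial}{\partial s}+t\,\frac{\partial}{\partial t}+n\Bigr)g_{n-1}.
\]
This is the type~$B$ analog of the identity $n\,f_n=\bigl(st\,\partial_s\partial_t+n-1\bigr)g_{n-1}$ behind Theorem~\ref{thm:Carlitz}; the extra term $s\,\partial_s+t\,\partial_t$ and the factor $2$ are exactly what the summand $2ij+i+j$ contributes in place of the bare $ij$ of the type~$A$ case.

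It then remains to substitute $g_{n-1}=B_{n-1}\,[(1-s)(1-t)]^{-n}$ and $f_n=B_n\,[(1-s)(1-t)]^{-(n+1)}$, carry out the differentiations with the aid of $\partial_s(1-s)^{-n}=\tfrac{n}{1-s}(1-s)^{-n}$ (and similarly in $t$), cancel the common factor $[(1-s)(1-t)]^{-n}$, and clear the remaining denominator by multiplying through by $(1-s)(1-t)$. Reading off the coefficients of $B_{n-1}$, $\partial_sB_{n-1}$, $\partial_tB_{n-1}$ and $\partial_s\partial_tB_{n-1}$ then gives the stated recurrence; in particular, once the rational cross-terms cancel, the constant coefficient collapses to $2n^2st-nst+n$. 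The initial value is checked directly: for $n=1$ one has $\sum_{i,j}(2ij+i+j+1)s^it^j=\tfrac{2st}{(1-s)^2(1-t)^2}+\tfrac{s}{(1-s)^2(1-t)}+\tfrac{t}{(1-s)(1-t)^2}+\tfrac{1}{(1-s)(1-t)}$, which clears to $1+st$.

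There is no conceptual obstacle; the work is bookkeeping, and the only step where I expect difficulty is this last one — keeping track of the many rational cross-terms produced by the product rule so that the linear and constant coefficients indeed collapse to the compact forms displayed. I would also confirm that every series appearing is the Taylor expansion at the origin of the stated rational function, so that the termwise differentiation used throughout is legitimate.
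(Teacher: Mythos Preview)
Your proposal is correct and follows essentially the same approach as the paper: both start from the binomial identity $n\binom{m+n}{n}=(m+n)\binom{m+n-1}{n-1}$ with $m=2ij+i+j$, convert it into the operator recurrence $nF_n=(2st\,\partial_s\partial_t+s\,\partial_s+t\,\partial_t+n)F_{n-1}$ for $F_n=B_n/[(1-s)(1-t)]^{n+1}$, and then substitute and clear denominators. Your explicit verification of the initial value $B_1(s,t)=1+st$ is a small addition not spelled out in the paper.
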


\begin{proof}
Following the proof for the case of the symmetric group in \cite[eq.~(9)]{Pet12}, 
we use the corresponding identity of binomial coefficients: 
\[n\binom{2ij+i+j +n}{n} = (2ij+i+j)\binom{2ij+i+j +n-1}{n-1} +
n\binom{2ij+i+j +n-1}{n-1}\,.\]
Multiplying both sides by the monomial $s^it^j$ and summing over all integers $i,j$ we get
\[\begin{split}
\sum_{i,j=0}^\infty &n\binom{2ij+i+j +n}{n} s^it^j = \\
&\sum_{i,j=0}^\infty (2ij+i+j)\binom{2ij+i+j +n-1}{n-1} s^it^j + 
\sum_{i,j=0}^\infty n\binom{2ij+i+j +n-1}{n-1} s^it^j
\,,
\end{split}
\]
from which we obtain the following recurrence for 
$F_n(s,t) = B_n(s,t)/(1-s)^{n+1}(1-t)^{n+1}$: 

\[nF_n(s,t) = 2st \frac{\partial^2}{\partial s\partial t} F_{n-1}(s,t) +
s \frac{\partial}{\partial s} F_{n-1}(s,t) +
t \frac{\partial}{\partial t} F_{n-1}(s,t) +
nF_{n-1}(s,t)\,.
\]
Now substitute back the expression for $F_n(s,t)$, multiply both sides with 
$(1-s)^{n+1}(1-t)^{n+1}$ and with a little work we get that
\[\begin{split}
nB_n(s,t) = &(2n^2st + nt(1-s) + ns(1-t) + n(1-s)(1-t))B_{n-1}(s,t)\\ &+  
(2nst(1-s) + s(1-s)(1-t))\frac{\partial}{\partial s}B_{n-1}(s,t)\\ &+
(2nst(1-t) + t(1-s)(1-t)) \frac{\partial}{\partial t}B_{n-1}(s,t)\\ &+
2st(1-s)(1-t)\frac{\partial^2}{\partial s\partial t}B_{n-1}(s,t)\,.
\end{split}
\]
\end{proof}

It would be of interest to find a homogeneous version of this theorem 
(an analogue of Theorem~\ref{thm:fourvariable}) and a recurrence
for the corresponding $\gamma_{n,i,j}$ coefficients in the case of type $B$.

\subsection{Cyclic descents}
One can also consider two-sided Eulerian-like polynomials using cyclic descents. 
A \emph{cyclic descent} of a permutation $\pi$ in 
$\mathfrak{S}_n$ is defined as 
\[\cdes(\pi) =  |\{ i : \pi_i > \pi_{(i+1)\bmod n}\}|  = 
\des(\pi) + \chi(\pi_n > \pi_1)\,,\]
where 
\[
\chi(a>b) = \begin{cases}
1, $ if $ a>b $, and $\\
0, $ otherwise.$
\end{cases}\]

The following theorem refines a result of \citet[Corollary 1]{Ful00}.
\begin{thm}
\label{thm:cyclic}
For $n\ge 1$,
\[(n+1)A_n(s,t) = 
\sum_{\pi \in \mathfrak{S}_{n+1}} s^{\cdes(\pi^{-1})} t^{\cdes{\pi}}\,.\]
\end{thm}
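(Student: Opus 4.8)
The plan is to establish a bijection between $\mathfrak{S}_{n+1}$ and $\mathfrak{S}_n \times \{0,1,\dots,n\}$ under which the cyclic descent statistics of $\pi$ and $\pi^{-1}$ on the larger set translate into the ordinary descent statistics of the smaller permutation and its inverse, plus a ``free'' parameter that accounts for the factor $n+1$. The natural map to try is the one sending $\pi \in \mathfrak{S}_{n+1}$ to the pair $(\bar\pi, p)$, where $p$ records the position of the letter $n+1$ in $\pi$ (there are $n+1$ choices, indexed say by $0,\dots,n$) and $\bar\pi \in \mathfrak{S}_n$ is the \emph{standardization} of the word obtained by deleting $n+1$. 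The key observation is that for cyclic descents, inserting the largest letter $n+1$ into any of the $n+1$ ``cyclic gaps'' of a cyclic word of length $n$ changes $\cdes$ in a completely uniform way: since $n+1$ is larger than everything, it creates a descent immediately after it and destroys whatever descent/ascent relation spanned that gap, so $\cdes$ increases by exactly $1$ regardless of where $n+1$ goes. This is the cyclic analogue of the fact that for linear permutations insertion of $n$ changes $\des$ by $0$ or $1$ depending on the slot, but cyclically there is no distinguished ``boundary,'' so the dependence disappears.

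Concretely, first I would make precise the notion of cyclic gap: a permutation $\pi \in \mathfrak{S}_{n+1}$ written cyclically has $n+1$ gaps (between $\pi_i$ and $\pi_{(i+1)\bmod(n+1)}$), and deleting $n+1$ and then re-inserting it into each gap gives all $n+1$ preimages of a given $\bar\pi \in \mathfrak{S}_n$. Then I would verify the statistic transfer: $\cdes(\pi) = \cdes_{\mathrm{lin}}(\bar\pi) + 1$ where $\cdes_{\mathrm{lin}}$ on the cyclic word of length $n$ is just $\cdes(\bar\pi)$ in the sense defined in the excerpt --- wait, more carefully, one should phrase this as $\cdes(\pi) = \cdes(\bar\pi) + 1$ using the cyclic descent statistic on $\mathfrak{S}_n$, and then separately use the fact that $\sum_{\pi\in\mathfrak{S}_{n+1}} t^{\cdes(\pi)}$ relates to $\sum_{\bar\pi}(n+1) t^{\cdes(\bar\pi)+1}$; but we actually need the \emph{joint} statement, so the transfer must be done simultaneously for $\pi$ and $\pi^{-1}$. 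The cleanest route: $\pi^{-1}$ has $n+1$ in position $p$ iff $\pi_{n+1} = $ (the value at that slot), i.e. the position of $n+1$ in $\pi^{-1}$ equals the value $\pi^{-1}_{n+1}$... I would instead argue that deleting the letter $n+1$ from $\pi$ and deleting it from $\pi^{-1}$ are compatible operations: if $\pi \in \mathfrak{S}_{n+1}$, write $\pi = \pi_1\cdots\pi_{n+1}$; removing the \emph{value} $n+1$ and restandardizing gives $\bar\pi$, while removing the letter in the last \emph{position} and restandardizing gives something related to $\pi^{-1}$. So the bijection I want is really: $\pi \mapsto (\sigma, a, b)$ is too much; rather, $\pi \mapsto \bar\pi$ (delete and standardize the value $n+1$) is $(n+1)$-to-one, and one checks $\overline{\pi^{-1}}$ depends only on $\bar\pi$, with $\cdes(\pi^{-1}) = \cdes(\overline{\pi^{-1}}) + 1 = \cdes(\bar\pi^{-1}) + 1$.

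The main obstacle I anticipate is precisely the claim that $\overline{\pi^{-1}} = (\bar\pi)^{-1}$ --- i.e. that deleting the value $n+1$ from $\pi$ and deleting it from $\pi^{-1}$ give inverse permutations after standardization. This is true but needs care: deleting the value $n+1$ from the one-line notation of $\pi$ corresponds, on the inverse side, to deleting the last \emph{position}'s entry, and these two operations do not obviously commute with taking inverses. The resolution is to use a cleaner description of the forgetful map in terms of the permutation matrix: $\pi \in \mathfrak{S}_{n+1}$ has a $1$ in cell $(n+1, \pi^{-1}_{n+1})$ (the ``top row'') and in cell $(\pi_{n+1}, n+1)$... no --- one should delete the row and column through a chosen entry. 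The right statement is: for the bijection to work jointly we should delete the entry in position $(n+1)$ of $\pi$ together with wherever $n+1$ sits, which in matrix terms deletes one row and one column, and the surviving $n\times n$ matrix (it is a permutation matrix after removing the row/column of $n+1$ if and only if $\pi_{n+1} = n+1$, which fails in general). Hence the honest approach is to fix the combinatorial lemma at the heart: \emph{inserting $n+1$ into a cyclic gap of $\pi$} on the one-line side corresponds on the inverse side to an operation that also increments $\cdes$ by exactly $1$, and to track this I would pass through the standard bijection relating inverse descents to the descent set of the inverse word and check both increments are uniform. Once that lemma is in hand, summing $s^{\cdes(\pi^{-1})} t^{\cdes(\pi)} = s^{\cdes(\bar\pi^{-1})+1} t^{\cdes(\bar\pi)+1}$ over the $(n+1)$ preimages of each $\bar\pi$ and over all $\bar\pi \in \mathfrak{S}_n$ yields $(n+1) A_n(s,t)$, completing the proof.
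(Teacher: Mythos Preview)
Your central combinatorial claim is false. You assert that inserting the largest letter $n+1$ into any of the $n+1$ cyclic gaps of $\bar\pi \in \mathfrak{S}_n$ increases $\cdes$ by exactly $1$, independent of the gap chosen. But inserting $n+1$ into a gap $\ldots a,b \ldots$ replaces the single cyclic adjacency $(a,b)$ by the two adjacencies $(a,n+1)$ (always an ascent) and $(n+1,b)$ (always a descent); hence $\cdes$ increases by $1$ when $(a,b)$ was an ascent and is \emph{unchanged} when $(a,b)$ was a descent. Concretely, take $\bar\pi = 12 \in \mathfrak{S}_2$ with $\cdes(\bar\pi)=1$: the three insertions of $3$ yield $312,\,132,\,123$ with cyclic descent numbers $1,\,2,\,1$, respectively. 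So your map $\pi \mapsto \bar\pi$ is indeed $(n+1)$-to-one, but $\cdes$ is not constant on its fibres, and the identity $\cdes(\pi)=\cdes(\bar\pi)+1$ on which the rest of your argument relies does not hold. The difficulties you anticipate later (whether $\overline{\pi^{-1}} = (\bar\pi)^{-1}$, etc.) are downstream of this; the approach already fails for $\cdes(\pi)$ alone.

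The paper uses a different $(n+1)$-to-one map that avoids this problem: instead of deleting the value $n+1$, one cyclically \emph{rotates} the word until $n+1$ sits in the last position. The key lemma is that the rotation $\pi \mapsto \pi\sigma$ (with $\sigma = 23\cdots(n+1)1$) preserves both $\cdes(\pi)$ and $\cdes(\pi^{-1})$ simultaneously; the first is immediate, and the second amounts to showing that shifting all \emph{values} cyclically by one also preserves $\cdes$. Once $\pi_{n+1}=n+1$, one has $\pi^{-1}_{n+1}=n+1$ as well, and then trivially $\cdes(\pi)=\des(\pi_1\cdots\pi_n)+1$ and $\cdes(\pi^{-1})=\des((\pi_1\cdots\pi_n)^{-1})+1$. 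Summing over the $n+1$ rotations of each such representative gives $(n+1)A_n(s,t)$. This is precisely the uniform ``$+1$'' transfer you were hoping for, but achieved by rotating $n+1$ to the boundary rather than excising it.
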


\begin{lem}
\label{lem:cyclic}
Let $\sigma = 23\dotso n 1$ denote the cyclic rotation in $\mathfrak{S}_n$ (for 
$n \ge 2)$. Then
\[ (\cdes(\pi), \cdes(\pi^{-1})) = (\cdes(\pi\sigma), \cdes((\pi\sigma)^{-1})).\]
In other words, the cyclic rotation simultaneously preserves the cyclic descent and the
cyclic inverse descent stastics.
\end{lem}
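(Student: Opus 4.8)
The plan is to show that right-multiplication by the cyclic rotation $\sigma = 23\dotso n1$ leaves the pair $(\cdes(\pi),\cdes(\pi^{-1}))$ unchanged, treating the two coordinates separately. For the first coordinate, note that $\pi\sigma$ is the word $\pi_2\pi_3\dotso\pi_n\pi_1$, i.e. the cyclic shift of the word $\pi = \pi_1\dotso\pi_n$ by one position. The multiset of cyclically consecutive pairs $\{(\pi_i,\pi_{(i+1)\bmod n})\}$ is literally the same multiset for $\pi$ and for $\pi\sigma$ — shifting the window does not create or destroy any of the $n$ comparisons around the circle — so $\cdes(\pi\sigma) = \cdes(\pi)$ is immediate from the definition $\cdes(\pi) = |\{i : \pi_i > \pi_{(i+1)\bmod n}\}|$.

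The second coordinate is the substantive part. Here I would compute $(\pi\sigma)^{-1} = \sigma^{-1}\pi^{-1}$, so I need to compare $\cdes(\pi^{-1})$ with $\cdes(\sigma^{-1}\pi^{-1})$; writing $\rho = \pi^{-1}$, the claim becomes $\cdes(\rho) = \cdes(\sigma^{-1}\rho)$, where now we left-multiply by $\sigma^{-1} = n1 2\dotso(n-1)$ (the map sending $1\mapsto n$ and $k\mapsto k-1$ for $k\ge 2$). Left-multiplication relabels the values of $\rho$: if $\rho_i = v$ then $(\sigma^{-1}\rho)_i = \sigma^{-1}(v)$, which is $v-1$ when $v \ge 2$ and $n$ when $v = 1$. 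The key observation is that this relabeling $v \mapsto v-1 \pmod n$ (cyclically on $\{1,\dotsc,n\}$) is order-preserving on every pair of values \emph{except} the pair $\{1,n\}$, whose order it flips. So for each cyclic position $i$, the comparison $\rho_i > \rho_{(i+1)\bmod n}$ has the same truth value after relabeling unless $\{\rho_i, \rho_{(i+1)\bmod n}\} = \{1,n\}$ — and there is exactly one index $i$ with $\rho_i = n$ and exactly one (possibly the same, possibly not) with $\rho_{(i+1)\bmod n} = n$. I would argue that among the cyclically consecutive pairs of $\rho$, the value $n$ is the larger element in the pair it leads (so that pair is counted as a cyclic descent for $\rho$ but not for $\sigma^{-1}\rho$) exactly when $n$ sits immediately before a $1$ cyclically; symmetrically, if $n$ is \emph{not} immediately before $1$, then the pair where $1$ precedes $n$ contributes a cyclic descent for $\sigma^{-1}\rho$ but not for $\rho$. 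In either case the net change in the cyclic descent count is zero, so $\cdes(\sigma^{-1}\rho) = \cdes(\rho)$.

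I would phrase this cleanly by isolating the two cyclic positions touching the value $n$: let $i$ be the position with $\rho_i = n$ and let $k$ be the position with $\rho_k = 1$, and consider the pairs $(\rho_{(i-1)\bmod n}, \rho_i)$ and $(\rho_i, \rho_{(i+1)\bmod n})$, together with the analogous pair involving $\rho_k$. All pairs not containing the value $n$ in either slot are unaffected by the relabeling (the value $1$ becomes $n$, but $n$ is always larger, matching that $1$ was always smaller — wait, here one must be slightly careful, since $1$ becoming $n$ flips its relation with every neighbor). The cleanest bookkeeping is: the relabeling flips the truth value of $\rho_a > \rho_b$ for a cyclic pair exactly when $\{\rho_a,\rho_b\} = \{1,n\}$, and otherwise preserves it. If $n$ immediately precedes $1$ cyclically in $\rho$, that single pair is the only flipped one, and it switches from a cyclic descent to a cyclic ascent; but then in $\sigma^{-1}\rho$ the value $n$ (former $1$) immediately precedes $n-1$ (former $n$)... hmm, one must recount. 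The honest approach is to set up a bijection between the cyclic descents of $\rho$ and those of $\sigma^{-1}\rho$ that is the identity away from the positions adjacent to the values $1$ and $n$, and handles those $O(1)$ positions by explicit case analysis (whether $1$ and $n$ are cyclically adjacent, and in which order).

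The main obstacle, then, is purely the careful local case analysis around the values $1$ and $n$ of $\rho$: showing that the single ``lost'' cyclic descent created by flipping the $\{1,n\}$ comparison is always compensated by a single ``gained'' cyclic descent elsewhere among the at most four cyclic positions incident to those two values. Once this local accounting is pinned down — and it is genuinely a finite check, cleanly organized by whether $1$ and $n$ are cyclically adjacent in $\rho$ — the lemma follows, and combined with the first-coordinate observation we get $(\cdes(\pi),\cdes(\pi^{-1})) = (\cdes(\pi\sigma),\cdes((\pi\sigma)^{-1}))$ as claimed. I would double-check the edge case $n = 2$ (where $\sigma = 21$ and the statement should hold trivially) and confirm consistency with Theorem~\ref{thm:cyclic} on a small example such as $n = 3$.
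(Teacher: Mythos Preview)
Your overall strategy matches the paper's: handle the two coordinates separately, the first being immediate (cyclic shift of positions), the second reducing to the claim that $\cdes(\rho) = \cdes(\sigma^{-1}\rho)$, i.e.\ that cyclically shifting \emph{values} preserves the cyclic descent count. But the local analysis you attempt for the second part is wrong, and you never recover from it.

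Your repeated assertion that the relabeling $v \mapsto v-1 \pmod n$ flips the comparison $\rho_a > \rho_b$ \emph{only} when $\{\rho_a,\rho_b\} = \{1,n\}$ is false. On $\{2,\dotsc,n\}$ the map $v \mapsto v-1$ is order-preserving, so any pair not containing the value $1$ is unaffected; the value $n$ is not special at all (it simply becomes $n-1$). The comparisons that flip are exactly the two cyclic pairs containing the value $1$, since $1$ is sent to $n$. You actually wrote this correctly mid-paragraph (``$1$ becoming $n$ flips its relation with every neighbor'') and then immediately reverted to the wrong claim and punted to an unspecified case analysis on ``four cyclic positions''.

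Once you track the right value, the argument is one line and no case analysis is needed. Let $k$ be the position of $1$ in $\rho$. The pair $(\rho_{k-1},1)$ is a cyclic descent and becomes $(\rho_{k-1}-1,n)$, a cyclic ascent; the pair $(1,\rho_{k+1})$ is a cyclic ascent and becomes $(n,\rho_{k+1}-1)$, a cyclic descent. Every other cyclic pair is unchanged. Hence $\cdes(\sigma^{-1}\rho) = \cdes(\rho)$. This is exactly the paper's argument (stated there with the value $n$ playing the special role, which amounts to using $\sigma$ in place of $\sigma^{-1}$).
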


\begin{rem} Lemma~\ref{lem:cyclic} is essentially the same as Theorem 6.5 in \citep{LP12}.
We give an elementary proof of it, for the sake of completeness.
\end{rem}

\begin{proof}
The part that $\cdes(\pi) = \cdes(\pi\sigma)$ is obvious since cyclical rotation does not 
effect the cyclic descent set. For the other part, it is equivalent to show that  
$\cdes(\pi) = \cdes(\sigma^{-1}\pi)$. In other words, the cyclic descent statistic is invariant under the operation when we cyclically shift the values of a permutation, i.e., add $1$ to each entry modulo $n$. For $\pi = \pi_1\dotso\pi_n$ an arbitrary permutation in 
$\mathfrak{S}_n$ denote the entry preceding $n$ and following $n$ by $a$ and $b$, 
respectively. Then $\pi = \pi_1\dotso a n b \dotso \pi_n$ and  
$\sigma^{-1}\pi = (\pi_1+1)\dotso (a+1)1(b+1) \dotso (\pi_n+1)$. Clearly, in all but one position the cyclic descents are preserved, same is true for the cyclic ascents. The $a\nearrow n$ cyclic ascent is replaced by the $(a+1) \searrow 1$ cyclic descent and similarly, 
$n \searrow b$ gets replaced by $1\nearrow (b+1)$. Thus, the total number of cyclic descents remains the same. 
\end{proof}

\begin{proof}[Proof of Theorem~\ref{thm:cyclic}]
Using Lemma~\ref{lem:cyclic} we can apply the cyclic rotation to any permutation in 
$\Sym_{n+1}$ until $\pi_{n+1} = n+1$. This will map exactly $n+1$ permutations 
in $\Sym_{n+1}$ to the same permutation $\pi_1\dots\pi_n(n+1)$. Clearly, 
$\cdes(\pi_1\dots\pi_n(n+1)) = \des(\pi_1\dots\pi_n)+1$ and 
$\cdes((\pi_1\dots\pi_n(n+1))^{-1}) = \des((\pi_1\dots\pi_n)^{-1})+1$ and the theorem 
follows.
\end{proof}

\section{Connection to inversion sequences}
\label{sec:invseq}

We conclude by proposing a combinatorial model for the joint distribution of descents 
and inverse descents. 

A permutation $\pi \in \Sym_n$ can be encoded as its inversion sequence 
$e = (e_1, \dotsc, e_n)$, where \[e_j = |\{i : i < j, \pi_i > \pi_j\}|.\] Let 
$I_n = \{(e_1, \dotsc, e_n) \in \mathbb{Z}^n : 0\le e_i \le i-1\}$
denote the set of inversion sequences for $\Sym_n$.

Recently, \cite{SS12} studied the \emph{ascent} statistic 
$\asc_I(e) = \left|\left\{i :  e_i < e_{i+1} \right\}\right|$ 
for inversion sequences (and
their generalizations)
and showed that this statistic is \emph{Eulerian}, i.e., it is equidistributed with the
descent statistic over permutations. We use the subscript $I$ to emphasize that this is a statistic
for inversion sequences which is different from the ascent statistic for permutations used
earlier in the paper.

\citet{MR01} also studied this representation of permutations under the name
``subexceedant functions''. They considered the statistic that counts that
distinct entries in $e \in I_n$, $\mathrm{dst}(\pi) = | \{e_i : 1\le i \le n\}|$. They gave
multiple proofs of the following observation (which they attributed to Dumont) that
this statistic is also Eulerian.

\begin{prop}[Dumont]
\[
A_n(x) = \sum_{e\in I_n} x^{\mathrm{dst}(e)}\,. 
\]
\label{prop:dumont}
\end{prop}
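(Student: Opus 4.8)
The plan is to establish a bijection between inversion sequences in $I_n$ and permutations in $\Sym_n$ that carries the statistic $\mathrm{dst}$ to the descent statistic (shifted appropriately). The most natural route is to work recursively on $n$, building an inversion sequence of length $n$ from one of length $n-1$ by appending a last coordinate $e_n \in \{0, 1, \dotsc, n-1\}$. The key bookkeeping observation is how $\mathrm{dst}$ changes under this extension: appending $e_n$ increases $\mathrm{dst}$ by one precisely when $e_n$ is a value not already appearing among $e_1, \dotsc, e_{n-1}$, and keeps it the same otherwise. Since $(e_1, \dotsc, e_{n-1}) \in I_{n-1}$ uses at most $n-1$ distinct values from $\{0, \dotsc, n-2\}$, if it uses exactly $d$ distinct values then exactly $d$ of the $n$ choices for $e_n$ leave $\mathrm{dst}$ unchanged and the remaining $n-d$ choices increase it by one. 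This is exactly the branching behavior encoded by the Eulerian recurrence $A_n(x) = \sum_k \langle {n \atop k} \rangle x^k$ built from $A_{n-1}$, where from a permutation with $d-1$ descents (i.e.\ $\mathrm{des} + 1 = d$) there are $d$ insertion positions for the letter $n$ that create a new descent... wait, that is the opposite count; I should instead match to the recurrence in the form where inserting $n$ into a permutation $\pi \in \Sym_{n-1}$ with $k$ descents gives $k+1$ permutations with $k$ descents (inserting $n$ at a descent or at the end) and $n-k$ permutations with... let me restate it cleanly below.

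First I would fix the precise recursion on the permutation side. Given $\pi \in \Sym_{n-1}$ with $\mathrm{des}(\pi) = k$, inserting the letter $n$ into one of the $n$ available slots yields: $k+1$ permutations in $\Sym_n$ with $\mathrm{des} = k$ (insert $n$ immediately after one of the $k$ descent tops, or at the very end), and $n-1-k$ permutations with $\mathrm{des} = k+1$ (insert $n$ into one of the ascent slots). Rewriting in terms of $d = \mathrm{des}+1$: from a permutation with descent-count-plus-one equal to $d$ we get $d$ children with value $d$ and $n-d$ children with value $d+1$ — and this exactly matches the inversion-sequence branching described above, with $d = \mathrm{dst}$. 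So the plan is: set up the obvious recursive correspondence (any recursive construction of inversion sequences, appending $e_n$ last, versus the standard "insert the largest letter" recursion on permutations) and check that under a suitable matching of the $n$ appended-values with the $n$ insertion-slots, $\mathrm{dst}(e) = \mathrm{des}(\pi) + 1$ is preserved at each step. Equivalently, and perhaps cleaner to write, one can prove $\sum_{e \in I_n} x^{\mathrm{dst}(e)} = \sum_{\pi} x^{\mathrm{des}(\pi)+1}$ by showing the left side satisfies the Eulerian recurrence $P_n(x) = x\bigl(x \tfrac{d}{dx} + (n-x\tfrac{d}{dx})\bigr)$-type relation, i.e.\ $P_n(x) = \sum_d \bigl(d\, [x^d]P_{n-1} + (n-d+1)[x^{d-1}]P_{n-1}\bigr)x^d$, directly from the append-$e_n$ decomposition, and then invoking that $A_n(x)$ satisfies the same recurrence with the same initial condition $P_1(x) = A_1(x) = x$.

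The only genuinely delicate point — and the one I would present carefully rather than wave at — is verifying the claim that for an inversion sequence $e' = (e_1, \dotsc, e_{n-1}) \in I_{n-1}$ with $\mathrm{dst}(e') = d$, exactly $d$ of the $n$ possible values of $e_n \in \{0, \dotsc, n-1\}$ are "old" (already present in $e'$) and $n-d$ are "new". The subtlety is that $e'$ takes values in $\{0, \dotsc, n-2\}$, a set of size $n-1$, while $e_n$ ranges over $\{0, \dotsc, n-1\}$, a set of size $n$; so the $d$ old values are among the first $n-1$ choices and the extra choice $e_n = n-1$ is always new, giving $n-1-d$ new choices from $\{0,\dotsc,n-2\}$ plus the one guaranteed-new choice $n-1$, for a total of $n-d$ new and $d$ old, as needed. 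Once this counting is pinned down, the recursion matches $A_n$ term by term and the proposition follows by induction. I expect no real obstacle beyond this off-by-one range bookkeeping; the proof is essentially a clean recursive argument, and alternatively one could cite \citet{MR01} for the bijective version, but I would give the self-contained recursive proof here.
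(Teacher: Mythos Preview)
Your recursive argument is correct: the branching counts match (from an inversion sequence with $d$ distinct values there are $d$ extensions preserving $\mathrm{dst}$ and $n-d$ extensions increasing it, exactly mirroring the insert-$n$ recursion on $\des+1$), and together with the common initial value $P_1(x)=A_1(x)=x$ this shows both sides satisfy the Eulerian recurrence~(\ref{eq:eulerian}).

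As for comparison: the paper does not actually supply a proof of this proposition. It is stated as a known result attributed to Dumont, with a reference to \citet{MR01} who give several proofs (bijective and otherwise). Your write-up therefore goes beyond what the paper does here. If you want to present it, the cleanest version is the one you end with---verify directly that $P_n(x)=\sum_{e\in I_n}x^{\mathrm{dst}(e)}$ satisfies $P_n(x)=nxP_{n-1}(x)+x(1-x)P_{n-1}'(x)$ via the append-$e_n$ decomposition---rather than the parallel recursive bijection, which requires specifying an explicit matching of the $n$ values of $e_n$ with the $n$ insertion slots and checking it is well-defined.
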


In fact, the joint distribution ($\asc_I,\mathrm{dst}$) over inversion sequences 
seems to agree
with the joint distribution $(\des, \ides)$ of descents and inverse descents over permutations. 
\begin{conj}
\[A_n(s,t) = \sum_{e\in I_n} s^{\mathrm{dst}(e)}t^{\asc_I(e)+1} \,. \] 
\end{conj}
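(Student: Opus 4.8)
\textbf{The plan is to} establish a bijection between inversion sequences in $I_n$ and permutations in $\Sym_n$ that simultaneously sends $(\mathrm{dst}, \asc_I)$ to $(\ides+1, \des+1)$ (or the analogous pair, up to the standard symmetries of $A_n(s,t)$). The natural first candidate is the classical correspondence $e \mapsto \pi$ used already in Proposition~\ref{prop:dumont}, under which $\mathrm{dst}(e) = \ides(\pi)+1$ should follow from the arguments of \citet{MR01}. So the real content is to understand what happens to $\asc_I(e)$ under that same map, and to show it matches $\des(\pi)+1$; if the classical encoding does not line up on the nose, one can first try composing with one of the known descent-preserving involutions (reverse, complement, inverse) on $\Sym_n$, since $A_n(s,t)$ is symmetric in $s,t$ and invariant under $\pi \mapsto \pi^{-1}$ by construction.

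\textbf{An alternative, and probably cleaner, route} is induction on $n$ via a recurrence. One would set $P_n(s,t) = \sum_{e\in I_n} s^{\mathrm{dst}(e)} t^{\asc_I(e)+1}$ and analyze the effect of appending a new last coordinate $e_n \in \{0,1,\dots,n-1\}$ to a sequence in $I_{n-1}$. Appending $e_n$ changes $\asc_I$ by $+1$ exactly when $e_{n-1} < e_n$, and changes $\mathrm{dst}$ by $+1$ exactly when $e_n$ is a value not already appearing among $e_1,\dots,e_{n-1}$. The difficulty is that these two events are correlated in a way that depends on the full multiset $\{e_1,\dots,e_{n-1}\}$, not just on $(\mathrm{dst},\asc_I)$ of the shorter sequence — so a naive one-step recurrence on $P_n(s,t)$ does not close. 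The fix is to refine the generating function: track also the last entry $e_{n-1}$, or better, track the pair (number of distinct values used, position of $e_{n-1}$ within the sorted list of used values), summing this refined function and checking it satisfies the bivariate Eulerian recurrence of Theorem~\ref{thm:Carlitz}, or at least its single-variable shadow plus enough symmetry to pin down the joint law.

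\textbf{The main obstacle}, in either approach, is the $\mathrm{dst}$ statistic: it is genuinely global (it depends on coincidences among \emph{all} entries), whereas $\asc_I$ and the permutation descent statistics are local. In the bijective approach this means one must find the right encoding so that ``$e_n$ introduces a new value'' translates into a local descent/ascent condition on $\pi$; the cycle-counting or ``records'' interpretation of $\mathrm{dst}$ from \citet{MR01} is the tool to lean on here. In the recursive approach it forces carrying extra bookkeeping and verifying a larger recurrence collapses correctly. I would first run the bijection route: write down the MacMahon--Rakotondrajao map explicitly, compute $(\mathrm{dst}, \asc_I)$ on small cases $n\le 5$ against the known expansions of $A_n(s,t;x,y)$ on page~\pageref{conj:gesseltau} (specialized via \eqref{eq:homogenized4var}), and adjust by a symmetry of $A_n(s,t)$ if needed; only if that fails to be a clean bijection would I fall back on the refined-recurrence argument.
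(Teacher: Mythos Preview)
The paper does not prove this statement: it is stated as a \emph{conjecture}, and the paper explicitly says ``This observation clearly deserves a bijective proof,'' adding that ``it is not even clear to begin with why the right-hand side should be a symmetric polynomial in variables $s$ and $t$.'' So there is no proof in the paper to compare your attempt against.

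Your write-up is likewise not a proof but a research plan. Both routes you sketch are sensible starting points and are in the spirit of what the paper hints at, but neither is carried to completion, and you yourself flag the main obstacle: the statistic $\mathrm{dst}$ is global, so neither the classical inversion-sequence bijection nor a one-step recurrence on $P_n(s,t)$ closes without extra bookkeeping. In particular, you have not shown that the Mantaci--Rakotondrajao map (or any composite with a standard involution) sends $\asc_I$ to $\des$ while simultaneously sending $\mathrm{dst}$ to $\ides+1$; and in the recursive approach you have not specified the refined generating function or verified that it satisfies the recurrence of Theorem~\ref{thm:Carlitz}. Until one of those is actually established, the conjecture remains open, just as in the paper.
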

This observation clearly deserves a bijective proof. Such a proof 
might shed light on a combinatorial 
proof of recurrence (\ref{eq:fourvariable}). Note that it is not even clear to begin with 
why the right-hand side should be a symmetric polynomial in variables $s$ and $t$.

\section*{Acknowledgments}
 I thank Ira Gessel for discussing his conjecture with me and for giving me feedback several 
 times during the preparation of this manuscript. His feedback, suggestions on notation and missing 
 references improved the presentation substantially. 
 I also thank Mireille Bousquet-M\'elou for sharing
 her observations on the recurrence of the $\gamma$ coefficients, and Carla Savage for 
 numerous discussions on inversion sequences and Eulerian polynomials that inspired 
 Section~\ref{sec:invseq}. I thank T.~Kyle Petersen for enlightening discussions, 
 Petter Br\"and\'en for helpful comments and for pointing
 out that Proposition~\ref{prop:dumont} was already known.
 Finally, I thank my advisor, Jim Haglund
 for his comments and guidance. A preliminary version of this article appeared in my dissertation written under his supervision.

\bibliographystyle{abbrvnat}
\bibliography{gessel}
\label{sec:biblio}

\end{document}